\def\ls{\lesssim}
\def\gs{\gtrsim}
\def\fz{\infty}
\def\r{\right}
\def\lf{\left}
\def\supp{{\mathop\mathrm{\,supp\,}}}
\def\rr{{\mathbb R}}
\def\rn{{{\rr}^n}}
\def\zz{{\mathbb Z}}
\def\nn{{\mathbb N}}
\def\cc{{\mathbb C}}
\newcommand{\wz}{\widetilde}
\newcommand{\cs}{{\mathcal S}}
\def\az{\alpha}
\def\lz{\lambda}
\def\blz{\Lambda}
\def\fai{\varphi}
\def\wz{\widetilde}
\def\ls{\lesssim}
\def\gs{\gtrsim}
\def\bbmo{{{\mathop\mathrm{BMO}}}}
\def\dsum{\displaystyle\sum}
\def\dint{\displaystyle\int}
\def\dfrac{\displaystyle\frac}
\def\dsup{\displaystyle\sup}
\def\dinf{\displaystyle\inf}
\newtheorem{theorem}{Theorem}[section]
\newtheorem{lemma}[theorem]{Lemma}
\newtheorem{proposition}[theorem]{Proposition}
\theoremstyle{definition}
\newtheorem{remark}[theorem]{Remark}
\newtheorem{definition}[theorem]{Definition}
\numberwithin{equation}{section}
\def\supp{{\mathop\mathrm{\,supp\,}}}
\def\lfz{\lfloor}
\def\rfz{\rfloor}
\numberwithin{equation}{section}
\begin{document}

\arraycolsep=1pt

\title{\bf\Large
Intrinsic Structures of Certain Musielak-Orlicz Hardy Spaces
\footnotetext {\hspace{-0.35cm}
2010 {\it Mathematics Subject Classification}. Primary: 42B30;
Secondary: 42B35, 46E30, 46B70.
\endgraf {\it Key words and phrases}.
Hardy space, Musielak-Orlicz function, Muckenhoupt weight, interpolation,
atom, Calder\'on-Zygmund decomposition.
\endgraf
This project is supported
by the National Natural Science Foundation of China (Grant Nos. 11501506,
11471042, 11571039 and 11671185).
Jun Cao is also partially supported by the Natural Science Foundation of Zhejiang University of Technology
(Grant No. 2014XZ011).}}
\author{Jun Cao, Liguang Liu\footnote{Corresponding author /
{\color{red}July 19, 2017.}}\,, Dachun Yang and Wen Yuan}
\date{}
\maketitle

\vspace{-0.6cm}

\begin{center}
\begin{minipage}{13.5cm}
{\small {\bf Abstract} \quad For any $p\in(0,\,1]$, let $H^{\Phi_p}(\mathbb{R}^n)$ be the
Musielak-Orlicz Hardy space associated with
the Musielak-Orlicz growth function $\Phi_p$, defined by setting, for any
$x\in\mathbb{R}^n$ and $t\in[0,\,\infty)$,
 $$
\Phi_{p}(x,\,t):=
\begin{cases}
\dfrac{t}{\log{(e+t)}+[t(1+|x|)^n]^{1-p}}& \qquad \textup{when}\ n(1/p-1)\notin
\mathbb N \cup \{0\};\vspace{0.2cm}\\
\dfrac{t}{\log(e+t)+[t(1+|x|)^n]^{1-p}[\log(e+|x|)]^p}& \qquad \textup{when}\ n(1/p-1)\in \mathbb N\cup\{0\},
\end{cases}
$$
which is the sharp target space of the bilinear decomposition of
the product of the Hardy space $H^p(\mathbb{R}^n)$
and its dual. Moreover, $H^{\Phi_1}(\mathbb{R}^n)$
is the prototype appearing in the real-variable theory of
general Musielak-Orlicz Hardy spaces. In this article, the authors find a new structure of
the space $H^{\Phi_p}(\mathbb{R}^n)$ by showing that, for any
$p\in(0,\,1]$,
$H^{\Phi_p}(\mathbb{R}^n)=H^{\phi_0}(\mathbb{R}^n)
+H_{W_p}^p(\rn)$ and, for any $p\in(0,\,1)$,
$H^{\Phi_p}(\mathbb{R}^n)=H^{1}(\mathbb{R}^n)
+H_{W_p}^p(\rn)$, where $H^1(\mathbb{R}^n)$ denotes the classical real Hardy space,
$H^{\phi_0}(\rn)$ the Orlicz-Hardy space associated with the Orlicz function
$\phi_0(t):=t/\log(e+t)$ for any $t\in [0,\infty)$
and $H_{W_p}^p(\mathbb{R}^n)$ the weighted Hardy space associated with
certain weight function $W_p(x)$ that is comparable to $\Phi_p(x,1)$ for any $x\in\mathbb{R}^n$.
As an application, the authors further establish an interpolation theorem of
quasilinear operators based on this new structure.}
\end{minipage}
\end{center}

\section{Introduction\label{s1}}

\hskip\parindent
The real-variable theory of the classical real Hardy spaces on the Euclidean space $\rn$ was initially developed by
Stein and Weiss \cite{SW60} and later by Fefferman and Stein \cite{FS72}.
For any $p\in(0,\infty)$, the classical real Hardy space $H^p(\rn)$ consists  of all
Schwartz distributions $f$ such that
$$f^+:= \sup_{t\in(0,\infty)} |\phi_t\ast f|\in L^p(\rn),$$
where $\phi$ is a function in the Schwartz class, $\int_{\rn}\phi(x)\,dx=1$
and $\phi_t(\cdot):=t^{-n}\phi(t^{-1}\cdot)$.
As one of the most important function spaces in harmonic analysis, $H^p(\rn)$ has many applications in various
fields of mathematics (see, for example,
\cite{FS72,St93,CLMS93,Lu95} and their references). Later, the theory of
$H^p(\rn)$ was extended to the setting
of the weighted Hardy spaces by Garc\'ia-Cuerva \cite{Ga79} and Str\"omberg,  Torchinsky \cite{ST89},
and also to the setting of the Orlicz-Hardy space by  Str\"omberg \cite{Str79} and Janson \cite{Ja80}.
Both of the latter two spaces
can be viewed as special cases of more general Musielak-Orlicz
Hardy spaces which were first introduced by Ky \cite{Ky14} (see also \cite{ylk17}
for a complete survey of the real-variable theory of Musielak-Orlicz
Hardy spaces).

The main aim of this article is to try to understand some intrinsic structure
of the Musielak-Orlicz Hardy space $H^{\Phi_p}(\rn)$
associated with the Musielak-Orlicz growth function
\begin{align}\label{Phip}
\Phi_{p}(x,\,t):=
\begin{cases}
\dfrac{t}{\log(e+t)+[t(1+|x|)^n]^{1-p}}& \ \text{when}\ n(1/p-1)\notin \nn\cup\{0\},\vspace{0.2cm}\\
\dfrac{t}{\log(e+t)+[t(1+|x|)^n]^{1-p}[\log(e+|x|)]^p}& \ \text{when}\ n(1/p-1)\in \nn\cup \{0\},
\end{cases}
\end{align}
where $x\in\rn$ and $t\in[0,\infty)$ (see \cite{BGK12,Ky14,BCKLYY}).
The precise definition of $H^{\Phi_p}(\rn)$ is as follows.
In what follows, we use $\cs(\rn)$ and $\cs'(\rn)$ to denote the space
of all Schwartz functions, equipped with the classical well-known topology,
and its dual space, equipped with the weak-$\ast$ topology.

\begin{definition}\label{defn-Hp}
Let $p\in(0,1]$ and  $\Phi_p$ be as in \eqref{Phip}.

\begin{enumerate}

\vspace{-0.2cm}

\item[\rm (i)]   For any $f\in\cs'(\rn)$ and $m\in\nn$,
the {\it non-tangential grand maximal function} $f_m^*$ of $f$ is defined by setting, for any $x\in\rn$
\begin{align}\label{grand-funct}
f_m^*(x):=\dsup_{{\fai}\in \mathcal{S}_m(\rn)}\,\dsup_{|y-x|<t,\, t\in (0,\fz)}\lf|f\ast {\fai}_t(y)\r|,
\end{align}
where $\fai_t(\cdot):=t^{-n}\fai(t^{-1}\cdot)$ for any $t\in(0,\infty)$, and
\begin{align*}
\mathcal{S}_m(\rn):=\lf\{\fai\in \mathcal{S}(\rn):\
\dsup_{|\az|\le m+1}\dsup_{x\in\rn} \lf(1+|x|\r)^{(m+2)(n+1)}|D^\az \fai (x)|\le 1\r\}.
\end{align*}

\item[\rm (ii)]
The {\it Musielak-Orlicz-Lebesgue space}
$L^{\Phi_p}(\rn)$ is defined to be the space of all measurable functions $f$ on $\rn$ such that
\begin{align*}
\lf\|f\r\|_{L^{\Phi_p}(\rn)}
:=\inf\lf\{\lz\in(0,\infty):\, \int_\rn \Phi_p(x, |f(x)|/\lz)\,dx\le 1\r\}<\infty.
\end{align*}
With $m$ being the largest integer not greater than $n(1/p-1)$, the {\it Musielak-Orlicz Hardy space}
$H^{\Phi_p}(\rn)$ is defined to be the space of all Schwartz distributions $f\in\cs'(\rn)$
such that
$$
\|f\|_{H^{\Phi_p}(\rn)}:= \|f^\ast_m\|_{L^{\Phi_p}(\rn)}<\infty,
$$
where $f_m^\ast$ is as in \eqref{grand-funct}.
\end{enumerate}
\end{definition}

Notice that the non-tangential grand maximal function $f^*$ in Definition \ref{defn-Hp} can
also be used to characterize the Hardy space
$H^p(\rn)$ when $p\in(0,1]$. Indeed, one has $\|f\|_{H^p(\rn)}\sim \|f^\ast\|_{L^p(\rn)}$
for any $p\in(0,1]$ and any $f\in\cs'(\rn)$ with the equivalent positive constants independent
of $f$.

One of main motivations for us to study the aforementioned
Musielak-Orlicz Hardy spaces $H^{\Phi_p}(\rn)$ for any $p\in(0,1]$
comes from the bilinear decomposition of the product of functions in Hardy space $H^p(\rn)$ and its dual space.
When $p=1$, Bonami et al. \cite{BGK12} established the
following sharp {bilinear} decomposition of the product of the Hardy space
$H^1(\rn)$ and its dual  space $\bbmo(\rn)$
\begin{align}\label{eq-H1}
H^1(\rn)\times\bbmo(\rn)\subset L^1(\rn)+H^{\rm log}(\rn),
\end{align}
where $H^{\rm log}(\rn)$ is just the Musielak-Orlicz Hardy
space $H^{\Phi_1}(\rn)$. The precise meaning of \eqref{eq-H1} is that
there exist two bounded bilinear operators
$S:\, H^1(\rn)\times \bbmo(\rn)\to L^1(\rn)$ and
$T:\, H^1(\rn)\times \bbmo(\rn)\to H^{\log}(\rn)$
such that the product, defined in the sense of $\cs'(\rn)$, of any
$f\in H^1(\rn)$ and $g\in\bbmo(\rn)$, denoted by $f\times g$,  can be
written as $$f\times g=S(f,\,g)+T(f,\,g);$$
Furthermore, there exists a positive constant $C$ such that, for any $(f,g)\in  H^1(\rn)\times \bbmo(\rn)$,
$$\|S(f, g)\|_{L^1(\rn)}\le C \|f\|_{H^1(\rn)}\|g\|_{\bbmo(\rn)}$$
and
$$\|T(f, g)\|_{H^{\log}(\rn)}\le C \|f\|_{H^1(\rn)}\lf[\|g\|_{\bbmo(\rn)}
+\lf|\fint_{B(\vec 0_n, 1)} g(x)\,dx\r|\r],$$
here and hereafter, $\vec 0_n$ denotes the \emph{origin} of $\rn$ and $B(\vec 0_n, 1)$
the \emph{open unit ball} of $\rn$ at $\vec 0_n$.
Moreover, it was proved in \cite{BGK12} that the space $H^{\log}(\rn)$ is optimal
in the sense that it can not be replaced by a smaller vector space.
The above result of \cite{BGK12} also answers a conjecture raised in \cite{BIJZ07}.
Based on this result, Ky \cite{Ky14} further developed a general real-variable theory of
Musielak-Orlicz Hardy spaces (see also \cite{ylk17} for a complete survey).
Thus, the space $H^{\log}(\rn)$ plays a role as a prototype
in the study of the real-variable theory of general Musielak-Orlicz Hardy
spaces.

Recently, the result of \cite{BGK12} was extended to the case $p\in(0,1)$ in \cite{BCKLYY}.
Indeed, when $p\in(0,1)$, it was proved in \cite{BCKLYY} that the space $H^{\Phi_p}(\rn)$ is
the optimal function space that is adapted to the bilinear
decomposition of the product of elements from the Hardy space $H^p(\rn)$ and
its dual space  $\mathfrak{C}_{1/p-1}(\rn)$
\begin{align}\label{eq-Hp}
H^p(\rn)\times\mathfrak{C}_{1/p-1}(\rn)\subset L^1(\rn)+H^{\Phi_p}(\rn).
\end{align}
The precise meaning of \eqref{eq-Hp} is as following: there exist two bounded bilinear operators
$S:\, H^p(\rn)\times \mathfrak C_{1/p-1}(\rn)\to L^1(\rn)$ and
$T:\, H^p(\rn)\times \mathfrak C_{1/p-1}(\rn)\to H^{\Phi_p}(\rn)$
such that the product, defined in the sense of $\cs'(\rn)$, of any  $f\in H^p(\rn)$
and $g\in\mathfrak C_{1/p-1}(\rn)$,  denoted by $f\times g$,  can be
written
as $$f\times g=S(f,\,g)+T(f,\,g)$$
and, furthermore, there exists a positive constant $C$ such that,
for any $(f,g)\in  H^p(\rn)\times \mathfrak C_{1/p-1}(\rn)$,
$$\|S(f, g)\|_{L^1(\rn)}\le C \|f\|_{H^p(\rn)}\|g\|_{\mathfrak C_{1/p-1}(\rn)}$$
and
$$\|T(f, g)\|_{H^{\Phi_p}(\rn)}\le C \|f\|_{H^p(\rn)}\lf[\|g\|_{\mathfrak C_{1/p-1}(\rn)}+\fint_{B(\vec 0_n, 1)} |g(x)|\,dx\r].$$
In particular, the target space $H^{\Phi_p}(\rn)$ in \eqref{eq-Hp}
was proved, in \cite{BCKLYY}, optimal
in the sense that it can not be replaced by a smaller vector space.

It should be mentioned that the study of the bilinear
decomposition of the product of elements from the Hardy space $H^p(\rn)$ and its dual space
can help us to improve the boundedness of many nonlinear qualities such as the div-curl product and
the weak Jacobian (see \cite{CLMS93,BGK12, BFG}) as well as
the endpoint boundedness of commutators (see \cite{Ky13, LCFY}).

Motivated by the aforementioned results of
\cite{BGK12,Ky14,BCKLYY}, it is interesting  to give a better understanding
of the structure of the Musielak-Orlicz Hardy
space $H^{\Phi_p}(\rn)$.
It is easy to observe that, for any $x\in\rn$ and $t\in(0,\infty)$, one has
\begin{align}\label{Phip-1}
\Phi_{p}(x,\,t)\sim
\begin{cases}
\dfrac{t}{1+[t(1+|x|)^n]^{1-p}}& \ \text{when}\ n(1/p-1)\notin \nn\cup\{0\},\vspace{0.2cm}\\
\dfrac{t}{1+[t(1+|x|)^n]^{1-p}[\log(e+|x|)]^p}& \ \text{when}\ n(1/p-1)\in \nn,\vspace{0.2cm}\\
\dfrac{t}{\log(e+t)+\log(e+|x|)}& \ \text{when}\ p=1
\end{cases}
\end{align}
with the equivalent positive constants independent of $x$ and $t$.
Based on \eqref{Phip-1},  for any $x\in\rn$ and $t\in[0,\,\fz)$,
we consider the Orlicz function
\begin{align}\label{fai}
\phi_0(t):=\frac{t}{\log(e+t)}
\end{align}
and the weight function
\begin{align}\label{Wp}
W_{p}(x):=
\begin{cases}
\dfrac{1}{\lf(1+|x|\r)^{n(1-p)}}& \ \text{when}\ n(1/p-1)\notin \nn\cup\{0\},\\
\dfrac{1}{\lf(1+|x|\r)^{n(1-p)}\lf[\log(e+|x|)\r]^p}& \ \text{when}\
n(1/p-1)\in \nn,\\
\dfrac{1}{\log(e+|x|)}& \ \text{when}\ p=1.
\end{cases}
\end{align}
Let $H^{\phi_0}(\rn)$  and
$H_{W_p}^p(\rn)$ be respectively
the {\it Orlicz-Hardy space} associated with $\phi_0$ and the
{\it weighted Hardy space} associated with $W_p$, which are defined in the same way
as Definition \ref{defn-Hp}(ii), but with $\|f\|_{L^{\Phi_p}(\rn)}$ therein replaced respectively by
\begin{align*}
\lf\|f\r\|_{L^{{\phi_0}}(\rn)}
:=\inf\lf\{\lz\in(0,\infty):\, \int_\rn {\phi_0}(|f(x)|/\lz)\,dx\le 1\r\}
\end{align*}
and
\begin{align*}
\lf\|f\r\|_{L_{W_p}^p(\rn)}:=\lf\{\dint_\rn \lf|f(x)\r|^pW_p(x)\,dx\r\}^{1/p}.
\end{align*}
We refer the reader to \cite{Ga79, Str79,Ja80,ST89} for more
properties on general Orlicz-Hardy spaces
and weighted Hardy spaces.

Recall that, in \cite{BL76}, for any two quasi-Banach spaces $A_0$ and $A_1$,
the pair $(A_0,\,A_1)$ is said to be {\it compatible} if there exists a Hausdorff space
$\mathbb{X}$ such that $A_0\subset \mathbb{X}$ and $A_1\subset \mathbb{X}$. For
any compatible pair $(A_0,\,A_1)$ of quasi-Banach spaces, the {\it sum space} $A_0+A_1$
is defined by setting
\begin{align}\label{sumsp}
A_0+A_1:=\lf\{a\in \mathbb{X}:\ \exists\, a_0\in A_0\,\, \text{and}\, \,a_1\in A_1\,\text{such that}\,\,
\,a=a_0+a_1\r\}
\end{align}
equipped with the {\it quasi-norm}
\begin{eqnarray*}
\|a\|_{A_0+A_1}:=\dinf\lf\{\|a_0\|_{A_0}+\|a_1\|_{A_1}:\, a=a_0+a_1,\, a_0\in A_0 \,\, \text{and}\,\,
a_1\in A_1\r\}.
\end{eqnarray*}
In what follows, we use $H^{\phi_0}(\rn)+H_{W_p}^p(\rn)$ (resp., $H^1(\rn)+H_{W_p}^p(\rn)$)
to denote the sum space, defined as in \eqref{sumsp}, with $\mathbb{X}:=\mathcal{S}'(\rn)$,
$A_0:=H^{\phi_0}(\rn)$ (resp., $A_0:=H^1(\rn)$) and $A_1:=H_{W_p}^p(\rn)$.

The main result of this article is the following representation of
$H^{\Phi_p}(\rn)$ for any $p\in(0,1]$ as the sum
of an (Orlicz-)Hardy and a weighted Hardy spaces.

\begin{theorem}\label{thm1}
Let $p\in(0,\,1]$. Define
$\Phi_p$,  ${\phi_0}$ and ${W_p}$
as in \eqref{Phip}, \eqref{fai} and \eqref{Wp}, respectively. Then
\begin{enumerate}
\item[{\rm(i)}]
the space $H^{\Phi_p}(\rn)$ and $H^{\phi_0}(\rn)+H_{W_p}^p(\rn)$ coincide
with equivalent quasi-norms;

\item[{\rm(ii)}] for any $p\in(0,\,1)$, the space
$H^{\Phi_p}(\rn)$ and $H^1(\rn)+H_{W_p}^p(\rn)$ coincide
with equivalent quasi-norms.
\end{enumerate}
\end{theorem}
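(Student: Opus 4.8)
The plan is to prove part~(i) directly at the level of maximal functions and then deduce part~(ii) by comparing $H^{\phi_0}(\rn)$ and $H^1(\rn)$ inside the sum space. For part~(i), the key observation is the pointwise equivalence, for $x\in\rn$ and $t\in[0,\infty)$,
\begin{align*}
\Phi_p(x,t)\sim \min\lf\{\phi_0(t),\ \phi_0\lf(t\cdot(1+|x|)^{-n(\cdots)}\r)\cdot(1+|x|)^{n(\cdots)},\ \dots\r\},
\end{align*}
more precisely the splitting of the denominator of $\Phi_p$ in \eqref{Phip-1} into its two summands. Writing $W_p(x)\sim\Phi_p(x,1)$ and noting $\phi_0$ is concave with $\phi_0(t)\sim t$ near $t=0$, one checks that $\Phi_p(x,t)$ behaves like $\phi_0(t)$ when $t(1+|x|)^n\lesssim1$ and like $t\,W_p(x)$ when $t(1+|x|)^n\gtrsim1$; hence there is a positive constant $C$ with
\begin{align*}
\Phi_p(x,t)\le C\min\lf\{\phi_0(t),\, t\,W_p(x)\r\}\quad\text{and}\quad \Phi_p(x,t)\ge C^{-1}\min\lf\{\phi_0(t),\, t\,W_p(x)\r\}.
\end{align*}
This is the arithmetic heart of the argument and should be dispatched by a short case analysis on the size of $(1+|x|)^n t$, handling separately the three regimes $n(1/p-1)\notin\nn\cup\{0\}$, $n(1/p-1)\in\nn$, and $p=1$.

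Next I would transfer this to the Musielak-Orlicz-Lebesgue level: the estimate $\Phi_p(x,t)\sim\min\{\phi_0(t),tW_p(x)\}$ yields, for any measurable $h$,
\begin{align*}
\|h\|_{L^{\Phi_p}(\rn)}\sim \inf\lf\{\|h_0\|_{L^{\phi_0}(\rn)}+\|h_1\|_{L^1_{W_p}(\rn)}:\ |h|\le|h_0|+|h_1|\r\},
\end{align*}
i.e.\ $L^{\Phi_p}(\rn)$ is the "sum space" of $L^{\phi_0}(\rn)$ and $L^1_{W_p}(\rn)$ in the sense of a Calder\'on couple; the explicit decomposition is $h_0:=h\,\mathbf 1_{\{(1+|\cdot|)^n|h|\le1\}}$ and $h_1:=h-h_0$. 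Applying this with $h=f^*_m$, the grand maximal function, gives $\|f\|_{H^{\Phi_p}(\rn)}\sim \|f^*_m\|_{L^{\phi_0}(\rn)+L^1_{W_p}(\rn)}$. The inclusion $H^{\phi_0}(\rn)+H^p_{W_p}(\rn)\hookrightarrow H^{\Phi_p}(\rn)$ is then immediate from subadditivity of $f\mapsto f^*_m$ and the bound $\|f^*_m\|_{L^{\phi_0}}\lesssim\|f\|_{H^{\phi_0}}$, $\|f^*_m\|_{L^1_{W_p}}\lesssim\|f^*_m\|_{L^p_{W_p}}^{?}$—wait, the exponents differ, so for this direction one instead uses that the $H^p_{W_p}$-norm controls a weighted $L^1$-type maximal bound; more cleanly, one decomposes a given $f=g+h$ with $g\in H^{\phi_0}$, $h\in H^p_{W_p}$ and estimates $f^*_m\le g^*_m+h^*_m$, with $\|g^*_m\|_{L^{\phi_0}}\sim\|g\|_{H^{\phi_0}}$ and $\|h^*_m\|_{L^p_{W_p}}\sim\|h\|_{H^p_{W_p}}$, then invokes the pointwise $\Phi_p$-bound to combine. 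The reverse inclusion $H^{\Phi_p}(\rn)\hookrightarrow H^{\phi_0}(\rn)+H^p_{W_p}(\rn)$ is the substantial direction: given $f\in H^{\Phi_p}(\rn)$ one must actually produce the decomposition $f=g+h$ at the level of \emph{distributions}, not merely maximal functions. The natural route is a Calder\'on-Zygmund decomposition of $f$ adapted to $\Phi_p$: for a suitable height $\lambda$ depending on the $L^{\Phi_p}$-structure, split $f$ into its "good" part (bounded, landing in $H^{\phi_0}$) and "bad" part (supported near $\{f^*_m>\lambda\}$, landing in $H^p_{W_p}$), using the atomic/molecular characterizations of all three Hardy spaces to control norms via the weighted/Orlicz Fefferman-Stein inequalities. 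Patching these local decompositions over the Whitney cubes of $\{f^*_m>\lambda\}$ and summing in $\lambda$ (dyadically) gives the claimed splitting with norm control.

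For part~(ii), it suffices to show $H^1(\rn)+H^p_{W_p}(\rn)=H^{\phi_0}(\rn)+H^p_{W_p}(\rn)$ when $p\in(0,1)$. One inclusion is trivial since $H^1(\rn)\subset H^{\phi_0}(\rn)$ (as $\phi_0(t)\lesssim t$). For the other, given $f=g+h$ with $g\in H^{\phi_0}(\rn)$ and $h\in H^p_{W_p}(\rn)$, the point is that the "low-oscillation / high-amplitude" part of $g$ that prevents $g$ from lying in $H^1$ can be absorbed into the weighted space: concretely, perform a Calder\'on-Zygmund decomposition of $g$ at height $1$ relative to $\phi_0$, writing $g=g_{\mathrm{good}}+\sum_j b_j$ where $g_{\mathrm{good}}\in H^1(\rn)$ with controlled norm (using that $\phi_0(t)\sim t$ for small $t$) and each $b_j$ is an $H^p_{W_p}$-atom up to a harmless normalization, since the bad cubes live where $|g|$ is large and there $W_p$ is small enough to make the weighted $L^p$ bound summable—this uses $p<1$ crucially, so that the weight $W_p$ has genuine decay $(1+|x|)^{-n(1-p)}$. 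Then $f=(g_{\mathrm{good}})+(\,h+\sum_j b_j\,)$ exhibits $f\in H^1(\rn)+H^p_{W_p}(\rn)$.

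I expect the main obstacle to be the reverse inclusion in part~(i): lifting the pointwise maximal-function equivalence to an honest distributional decomposition $f=g+h$ with simultaneous control of $\|g\|_{H^{\phi_0}}$ and $\|h\|_{H^p_{W_p}}$. This cannot be done by merely splitting $f^*_m$; it requires a Calder\'on-Zygmund/atomic decomposition argument in which the stopping-time cubes are chosen according to where $(1+|x|)^n$ forces the transition between the "$\phi_0$-regime" and the "weighted-$L^p$ regime", and then re-summing the resulting atoms into the two target spaces using their atomic characterizations together with the Fefferman-Stein vector-valued maximal inequality in the weighted and Orlicz settings. Verifying that the atoms produced on the bad cubes genuinely satisfy the size/cancellation/support conditions of $H^p_{W_p}$-atoms (in particular the correct vanishing-moment order $m=\lfloor n(1/p-1)\rfloor$, which is exactly the $m$ built into Definition~\ref{defn-Hp}) is where the delicate bookkeeping lies.
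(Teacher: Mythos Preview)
Your pointwise comparison contains an exponent error: for $p\in(0,1)$ one has $\Phi_p(x,t)\sim\min\{\phi_0(t),\, t^p W_p(x)\}$, not $\min\{\phi_0(t),\, t\,W_p(x)\}$. The second summand in the denominator of $\Phi_p$ is $[t(1+|x|)^n]^{1-p}$, and dividing $t$ by it gives $t^p(1+|x|)^{-n(1-p)}=t^pW_p(x)$. You half-notice this (``wait, the exponents differ'') but misdiagnose where the discrepancy lives; with the correct formula the Lebesgue-level analogue is $L^{\Phi_p}\sim L^{\phi_0}+L^p_{W_p}$ and the mismatch you worried about disappears.

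The more substantial gap is the hard inclusion $H^{\Phi_p}\subset H^{\phi_0}+H^p_{W_p}$, where you stop at ``stopping-time cubes chosen according to where $(1+|x|)^n$ forces the transition''. The paper supplies two explicit selection rules. For $p=1$ it takes an atomic decomposition $f=\sum_j\lz_j a_j$ and sorts atoms by the geometry of their supporting balls $B(c_j,r_j)$: if $r_j<1$ and $|c_j|<1/r_j$ then $\|\chi_{B_j}\|_{L^{\Phi_1}}\sim\|\chi_{B_j}\|_{L^{\phi_0}}$, so $a_j$ is a $\phi_0$-atom; otherwise $\|\chi_{B_j}\|_{L^{\Phi_1}}\sim\|\chi_{B_j}\|_{L^1_{W_1}}$ and $a_j$ is a $W_1$-atom (this is Proposition~\ref{prop-new}). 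For $p\in(0,1)$ the paper instead runs the full Calder\'on--Zygmund decomposition over all levels $2^k$ and sorts the pieces $h^k_i$ by whether their supporting ball has at least half its Lebesgue measure inside $E:=\{x:\,f^*(x)<(1+|x|)^{-n}[\log(e+|x|)]^{-p/(1-p)}\}$ (these go to $H^1$) or inside $E^\complement$ (these go to $H^p_{W_p}$); the set $E$ is exactly where $\Phi_p(x,f^*(x))\sim f^*(x)$, while on $E^\complement$ one has $\Phi_p(x,f^*(x))\sim[f^*(x)]^pW_p(x)$. Note also that for $p<1$ the paper proves $H^{\Phi_p}\subset H^1+H^p_{W_p}$ \emph{directly} and deduces (i) from (ii), not the other way around. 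Your repair step for (ii) can be salvaged, but not by the reasoning you give: the bad cubes in a Calder\'on--Zygmund decomposition of $g\in H^{\phi_0}$ live where $g^*$ is large, which has no a priori relation to $|x|$ being large, so ``$W_p$ is small there'' is false in general. What actually works is the crude bound $W_p\le 1$ together with $p<1$: Chebyshev for $\phi_0$ gives $|\{g^*>2^k\}|\ls k\,2^{-k}$ for $k\ge 1$, so $\sum_{k\ge 0}2^{kp}|\{g^*>2^k\}|\ls\sum_{k\ge 0}k\,2^{-k(1-p)}<\infty$.
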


The new structure of the Musielak-Orlicz Hardy space $H^{\Phi_p}(\rn)$ established in Theorem \ref{thm1}
enables us to reduce the study of many properties of $H^{\Phi_p}(\rn)$
to the corresponding ones of the Orlicz-Hardy space $H^{\phi_0}(\rn)$ when $p\in(0,\,1]$
(or the Hardy space $H^1(\rn)$ when $p\in(0,\,1)$)
and the weighted Hardy space $H_{W_p}^p({\rn})$, where the latter three
kinds of Hardy-type spaces are well studied
in various literatures; see, for example, \cite{FS72,Lu95,Ga79,Str79,Ja80,ST89}
and their references. A major job in the proof of Theorem \ref{thm1} is
decomposing every $f\in H^{\Phi_p}(\rn)$ into the sum of two parts, which
belongs to the desired sum space. We obtain this decomposition
by using the atomic characterization of $H^{\Phi_1}(\rn)$
when $p=1$ and the Calder\'on-Zygmund decomposition
of $H^{\Phi_p}(\rn)$ when $p\in(0,\,1)$.
The main trick is that we use different selection principles
in different decompositions and these selection principles
are based on the norm estimates for the characteristic functions of
the balls, which are established in Section \ref{s2}.

As an application of Theorem \ref{thm1}, we consider a concrete problem
of the interpolation of quasilinear operators.
Recall that the following definition of quasilinear operators is from \cite{Gr08}.
Let $T$ be an operator defined on some quasi-Banach space $A$ and taking values
in the set of all complex-valued finite almost everywhere measurable functions on $\rn$.
Such an operator $T$ is said to be  {\it quasilinear} if there exists a positive constant
$C$ such that, for any $f, \, g\in {A}$ and $\lz\in\cc$,
\begin{align*}
\lf|T(f)\r|= |\lz|\lf|f\r| \ \ \text{and} \ \
\lf|T(f+g)\r|\le C \lf(\lf|f\r|+\lf|g\r|\r).
\end{align*}

\begin{theorem}\label{thm2}
Let $p\in(0, \,1]$. Let $\Phi_p$, ${\phi_0}$ and $W_p$  be
as in \eqref{Phip}, \eqref{fai} and \eqref{Wp}, respectively. Assume that $T$ is a
quasilinear operator bounded on $H_{W_p}^p(\rn)$. Then
\begin{enumerate}
\item[{\rm(i)}] if T is bounded on $H^{\phi_0}(\rn)$, then
$T$ is bounded on $H^{\Phi_p}(\rn)$;

\item[{\rm(ii)}] if $p\in(0,\,1)$ and $T$ is bounded on $H^1(\rn)$, then
$T$ is bounded on $H^{\Phi_p}(\rn)$.
\end{enumerate}
\end{theorem}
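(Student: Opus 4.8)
The plan is to prove both parts at once, deducing the boundedness of $T$ on $H^{\Phi_p}(\rn)$ from the structural identity of Theorem \ref{thm1} together with the pointwise defining inequality of a quasilinear operator. First, fix $f\in H^{\Phi_p}(\rn)$; in case (i), Theorem \ref{thm1}(i) lets me write $f=f_0+f_1$ with $f_0\in H^{\phi_0}(\rn)$, $f_1\in H_{W_p}^p(\rn)$ and $\|f_0\|_{H^{\phi_0}(\rn)}+\|f_1\|_{H_{W_p}^p(\rn)}\ls\|f\|_{H^{\Phi_p}(\rn)}$, while in case (ii), where $p\in(0,1)$, Theorem \ref{thm1}(ii) gives the same decomposition with $H^1(\rn)$ replacing $H^{\phi_0}(\rn)$. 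Then, applying the hypothesised boundedness of $T$ on each summand space---and, if that hypothesis is stated with the Hardy quasi-norm on the target, the a.e.\ bound $|g|\le g_m^\ast$ for the measurable functions $g=T(f_0),\,T(f_1)$---I obtain
\[
\|T(f_0)\|_{L^{\phi_0}(\rn)}+\|T(f_1)\|_{L_{W_p}^p(\rn)}\ls\|f\|_{H^{\Phi_p}(\rn)}
\]
in case (i), and the same with $\|T(f_0)\|_{L^1(\rn)}$ in case (ii).

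Next I would recombine: since $f=f_0+f_1$, quasilinearity of $T$ gives $|T(f)|\ls|T(f_0)|+|T(f_1)|$ almost everywhere. Because $\|\cdot\|_{L^{\Phi_p}(\rn)}$ is monotone in the pointwise order and $\Phi_p(x,\cdot)$ is doubling (the inequality $\Phi_p(x,2t)\le2\Phi_p(x,t)$ is read off \eqref{Phip}), it satisfies a quasi-triangle inequality, so $\|T(f)\|_{L^{\Phi_p}(\rn)}\ls\|T(f_0)\|_{L^{\Phi_p}(\rn)}+\|T(f_1)\|_{L^{\Phi_p}(\rn)}$. I would then call on the elementary pointwise comparisons, valid for all $x\in\rn$ and $t\in[0,\infty)$,
\[
\Phi_p(x,t)\le\phi_0(t),\qquad\Phi_p(x,t)\le t,\qquad\Phi_p(x,t)\ls t^pW_p(x),
\]
the first two being clear from \eqref{Phip} and \eqref{fai} (the denominator of $\Phi_p$ is at least $\log(e+t)\ge1$) and the third holding, by \eqref{Wp}, since that denominator dominates its $W_p$-part; combined with the lower type $p$ of $\Phi_p$, these yield the continuous embeddings $L^{\phi_0}(\rn)\hookrightarrow L^{\Phi_p}(\rn)$, $L^1(\rn)\hookrightarrow L^{\Phi_p}(\rn)$ and $L_{W_p}^p(\rn)\hookrightarrow L^{\Phi_p}(\rn)$. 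Feeding the first step into these embeddings then gives $\|T(f)\|_{L^{\Phi_p}(\rn)}\ls\|f\|_{H^{\Phi_p}(\rn)}$, the desired boundedness: part (i) uses the first and third embeddings, part (ii) the second and third.

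The one genuinely non-formal point---and where I expect the main difficulty to sit---is that, $T$ being only quasilinear, $T(f)\ne T(f_0)+T(f_1)$ in general, so the sum-space splitting of $H^{\Phi_p}(\rn)$ cannot be carried through $T$ at the level of distributions. The resolution is to measure the output not in the cancellation-sensitive Hardy quasi-norm but in the monotone Musielak--Orlicz--Lebesgue quasi-norm, for which the pointwise bound $|T(f)|\ls|T(f_0)|+|T(f_1)|$ is precisely what is needed; the three pointwise comparisons for $\Phi_p$ then transport the estimates for the two pieces---living on the Orlicz and the weighted scales, respectively---back to the single scale attached to $H^{\Phi_p}(\rn)$. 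Two routine checks support the argument: that the doubling of $\Phi_p$ keeps the quasi-triangle constant of $\|\cdot\|_{L^{\Phi_p}(\rn)}$ uniform, and that $T(f)$, being dominated a.e.\ by two functions each lying in a Hardy space, is itself a bona fide tempered distribution, so that ``$T$ is bounded on $H^{\Phi_p}(\rn)$'' is a meaningful statement.
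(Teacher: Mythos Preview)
Your argument only establishes $\|T(f)\|_{L^{\Phi_p}(\rn)}\ls\|f\|_{H^{\Phi_p}(\rn)}$, but the theorem asserts boundedness \emph{on} $H^{\Phi_p}(\rn)$, meaning $\|T(f)\|_{H^{\Phi_p}(\rn)}=\|(T(f))^\ast_m\|_{L^{\Phi_p}(\rn)}\ls\|f\|_{H^{\Phi_p}(\rn)}$. Controlling the Musielak--Orlicz--Lebesgue quasi-norm of $T(f)$ does not control its Hardy quasi-norm; the pointwise inequality $|g|\le g_m^\ast$ that you invoke runs in the wrong direction for that purpose. So your deliberate move to the monotone $L^{\Phi_p}$ scale --- while making each individual step clean --- leaves you with a strictly weaker conclusion than the one claimed, and there is no evident way to climb back from $L^{\Phi_p}$ to $H^{\Phi_p}$ at the end.

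The paper's proof never leaves the Hardy scale. It uses the continuous embeddings $H^{\phi_0}(\rn)\hookrightarrow H^{\Phi_p}(\rn)$ and $H_{W_p}^p(\rn)\hookrightarrow H^{\Phi_p}(\rn)$ (half of Theorem~\ref{thm1}, or obtained by applying $\Phi_p(x,t)\ls\min\{\phi_0(t),\,t^pW_p(x)\}$ to the grand maximal function itself) to get
\[
\|T(f_0)\|_{H^{\Phi_p}(\rn)}\ls\|T(f_0)\|_{H^{\phi_0}(\rn)}\ls\|f_0\|_{H^{\phi_0}(\rn)},
\]
and likewise $\|T(f_1)\|_{H^{\Phi_p}(\rn)}\ls\|f_1\|_{H_{W_p}^p(\rn)}$; the opening step $\|T(f)\|_{H^{\Phi_p}(\rn)}\ls\|T(f_0)\|_{H^{\Phi_p}(\rn)}+\|T(f_1)\|_{H^{\Phi_p}(\rn)}$ is recorded as a direct consequence of quasilinearity together with \eqref{Phip}. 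Your concern that this passage through the Hardy quasi-norm is not entirely innocent is reasonable, but the paper treats it as routine, and in any case your $L^{\Phi_p}$ detour does not reach the $H^{\Phi_p}$ target the theorem demands.
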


This article is organized as follows. In Section \ref{s2}, we establish several
technical lemmas which are needed in the proof of Theorems
\ref{thm1} and \ref{thm2}. Section \ref{s3} is devoted to the proof of Theorem
\ref{thm1}. Finally, using Theorem
\ref{thm1}, we prove Theorem \ref{thm2} in Section \ref{s4}.

At the end of this section, we make some convention on the notation. Throughout this article, let $\nn:=\{1,2,\ldots\}$,
$\zz_+:=\nn\cup\{0\}$ and $\zz:=\{0,\pm1, \pm2,\dots\}$.
For any $x\in\rr^n$ and $r\in(0,\fz)$, denote by $B(x, r)$ the ball with center $x$ and radius $r$, that is, $B(x,r):=\{y\in\rr^n:\ |x-y|<r\}$.
For any ball $B\subset\rn$, we always denote by $c_B$ its center  and $r_B$ its radius
and, for any $\lz\in (0,\fz)$, by $\lz B$ the ball with center $c_B$ and radius $\lz r_B$.
For any set $E\subset\rn$, $\chi_E$ denotes its
{\it characteristic function}.
We use $C$ to denote a {\it positive
constant} that is independent of the main parameters involved but
whose value may differ from line to line.
Constants with subscripts, such as $C_1$, do not
change in different occurrences. If $f\le Cg$, we then write $f\ls
g$ and, if $f\ls g\ls f$, we then write $f\sim g$.
For any $s\in\rr$, let $\lfloor s\rfloor$ be the largest integer not greater than $s$.
We always use $\az$ to denote a multi-index $(\az_1,\dots, \az_n)$ with every $\az_i$
being a non-negative integer.

\section{Several technical lemmas\label{s2}}

\hskip\parindent In this section, we present several technical
lemmas which serve as preparations to prove Theorems \ref{thm1}
and \ref{thm2}.
To this end, we begin with recalling some notions used in \cite{Ky14}.

\begin{definition}\label{17-defOF}
For any $p\in(0,\infty)$, an \emph{Orlicz function} $\phi$ (which means that $\phi$ is nondecreasing and satisfies $\phi(0)=0$,
$\phi(t)>0$ for $t\in(0,\infty)$ and $\lim_{t\to\infty}\phi(t)=\infty$) is said to
be of {\it positive lower} (resp., {\it upper}) {\it type} $p$ if there exits a positive constant
$C$ such that, for any $t\in[0,\infty)$ and $s\in(0,1]$ (resp., $s\in[1,\infty)$),
$$
\phi(st)\le C s^p\phi(t).
$$
\end{definition}

\begin{definition}\label{17-defMOF}
A function $\phi:\, \rn\times [0,\infty)\to [0,\infty)$ is called a {\it Musielak-Orlicz function} if
 the function $\phi(x,\cdot):\, [0,\infty)\to [0,\infty)$ is an Orlicz function for any $x\in\rn$,
 and the function $\phi(\cdot, t)$ is a measurable function for any $t\in [0,\infty)$.
\end{definition}

\begin{definition}\label{17-defUT}
Let $\phi$ be a Musielak-Orlicz function.
For any given $p\in(0,\infty)$, the function $\phi$ is said to be of {\it positive uniformly
lower} (resp., {\it upper}) {\it type} $p$ if there exits a positive constant
$C$ such that, for any $x\in\rn$, $t\in[0,\infty)$ and $s\in(0,1]$ (resp., $s\in[1,\infty)$),
$$
\phi(x, st)\le C s^p\phi(x, t).
$$
\end{definition}

\begin{definition}\label{17-defUMC}
Let $\phi$ be a Musielak-Orlicz function and $q\in[1,\infty)$.
The function $\phi$ is said to satisfy the
{\it uniformly Muckenhoupt $\mathbb A_q(\rn)$ condition},
namely,  $\phi\in \mathbb A_q(\rn)$, if
\begin{align*}
[\phi]_{\mathbb A_q(\rn)}
:=\hspace{-0.1cm}
\begin{cases}
\displaystyle\sup_{t\in(0,\infty)}
\sup_{B\subset\rn} \lf[\frac1{|B|}\int_B \phi(x,t)\,dx\r]
\lf[\frac1{|B|}\int_B \{\phi(x,t)\}^{\frac{-1}{q-1}}\,dx\r]^{q-1}
\hspace{0.4cm}&\text{when}\; q\in(1,\infty),\vspace{0.2cm}\\
\displaystyle\sup_{t\in(0,\infty)}\sup_{B\subset\rn} \lf[\frac1{|B|}\int_B \phi(x,t)\,dx\r]
\lf[\mathop{\sup}_{x\in B} \{\phi(x,t)\}^{-1}\r] &\text{when}\; q=1
\end{cases}
\end{align*}
is finite, where the second suprema are taken over all balls $B$ of $\rn$.
Let
$$\mathbb A_\infty(\rn):= \bigcup_{q\in[1,\infty)}\mathbb A_q(\rn).$$
\end{definition}

\begin{remark} \label{rem-new}
Let  $p\in(0,1]$, $\Phi_p$ be as in \eqref{Phip}, $\phi_0$ as in \eqref{fai} and $W_p$ as in \eqref{Wp}.
\begin{enumerate}
\item[\rm (i)]
We know (see \cite{Ky14} for the case $p=1$ and
\cite{BCKLYY} for the case $p\in(0,1)$) that  $\Phi_p$  is a Musielak-Orlicz function
of uniformly upper
$1$ and of uniformly lower type $p$, and
belongs to the uniformly Mukenhoupt weight class
$\mathbb{A}_1(\rn)$.

\item[\rm (ii)]
Notice that $\phi_0(t)\sim \Phi_1(0,\,t)$ and
$W_p(x)\sim \Phi_p(x,\,1)$, where the equivalent positive constants are
independent of $x$ and $t$. From these and (i) of this remark, it follows immediately that $\phi_0$ is of upper and lower types $1$,
and $W_p$ belongs
to the usual Muckenhoupt weight class $A_1(\rn)$. In particular,
there exists a positive constant $C$ such that,
for any ball $B$ in $\rn$,
\begin{align}\label{f-A1}
\frac{1}{|B|}\dint_{B}W_p(x)\,dx
\le C \mathop{\mathrm{essinf}}_{y\in B} W_p(y).
\end{align}

\end{enumerate}

\end{remark}

For any $p\in(0,\,1]$, the next lemma  provides an $L^{\Phi_p}(\rn)$-norm
estimate for $\chi_B$, which was proved in \cite{Ky14} when $p=1$ and \cite{BCKLYY} when $p\in(0,1)$.

\begin{lemma}\label{lem-mon}
Let $p\in(0,\,1]$, $\az=1/p-1$ and $B=B(c_B,\,r_B)$ with $c_B\in\rn$ and $r_B\in(0,\infty)$.

\begin{enumerate}

\item[{\rm(i)}] If $p=1$,  then
$$\|\chi_B\|_{L^{\Phi_1}(\rn)}\sim
\frac{|B|}{\log(e+1/|B|)+\sup_{x\in B}\lf[\log(e+|x|)\r]}
\sim \frac{|B|}{|\log r_B|+\log(e+|c_B|)},$$
where the equivalent constants are positive and independent of $B$.

\item[{\rm(ii)}] If $p\in(0,\,1)$, then
$$\|\chi_B\|_{L^{\Phi_p}(\rn)}\sim \Psi_\alpha (B)|B|,$$
where
$$\Psi_\alpha (B)
:=\begin{cases}
\min\lf\{1, \lf(\dfrac{r_B}{1+|c_B|}\r)^{n\alpha}\r\}&\qquad \textup{when}\; n\alpha\notin \nn,
\vspace{0.2cm}\\
\min\lf\{1, \lf(\dfrac{r_B}{1+|c_B|}\r)^{n\alpha}\r\}\,\dfrac{1}{\log(1+|c_B|+r_B)}&\qquad \textup{when}\; n\alpha\in \nn
\end{cases}
$$
and the equivalent constants are positive and independent of $B$.
\end{enumerate}
\end{lemma}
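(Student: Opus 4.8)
The plan is to compute $\|\chi_B\|_{L^{\Phi_p}(\rn)}$ directly from the definition of the Luxemburg(-type) quasi-norm, exploiting the equivalence \eqref{Phip-1} which reduces $\Phi_p(x,t)$ (up to harmless multiplicative constants) to an explicit elementary expression in $|x|$ and $t$. By definition,
$$
\|\chi_B\|_{L^{\Phi_p}(\rn)}=\inf\lf\{\lz\in(0,\fz):\ \int_B\Phi_p(x,1/\lz)\,dx\le1\r\},
$$
so it suffices to understand, for each fixed $\lz\in(0,\fz)$, the quantity $F_B(\lz):=\int_B\Phi_p(x,1/\lz)\,dx$, which is continuous and strictly decreasing in $\lz$; the desired norm is the unique $\lz$ with $F_B(\lz)=1$. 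The strategy is to produce a candidate value $\lz_0$ (namely $\Psi_\az(B)|B|$ when $p\in(0,1)$, resp. the displayed expression when $p=1$) and to show $F_B(\lz_0)\sim1$ by bounding it above and below by absolute constants; since $F_B$ decreases, two-sided control of $F_B$ at a point comparable to $\lz_0$ yields the claimed equivalence of norms.

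The main work is estimating $F_B(\lz)$. Using \eqref{Phip-1}, for $n(1/p-1)\notin\nn\cup\{0\}$ one has
$$
F_B(\lz)\sim\int_B\frac{1/\lz}{1+[(1+|x|)^n/\lz]^{1-p}}\,dx
=\int_B\frac{1}{\lz+\lz^p(1+|x|)^{n(1-p)}}\,dx,
$$
and the analysis splits according to whether $\lz$ dominates or is dominated by $\lz^p(1+|x|)^{n(1-p)}$ on $B$, i.e.\ whether $(1+|x|)^{n}\lessgtr\lz^{-1}\cdot\lz$; over a ball the quantity $1+|x|$ is comparable to $1+|c_B|$ when $r_B\ls1+|c_B|$ and to $r_B$ otherwise, so in either regime $(1+|x|)^n$ is essentially constant on $B$ (up to a split into $B\cap B(\vec0_n,2)$ and its complement when $B$ is large, handled by a dyadic annular decomposition and summing a geometric-type series, which is where the exponent $n\az\in\nn$ versus $n\az\notin\nn$ dichotomy and the extra $\log$ factor enter). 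Plugging in $\lz=\Psi_\az(B)|B|$ and checking that the two terms in the denominator balance then gives $F_B\sim1$. The case $p=1$ is analogous but uses the third line of \eqref{Phip-1}: $F_B(\lz)\sim\int_B[\log(e+1/\lz)+\log(e+|x|)]^{-1}\,dx$, and since $\log(e+|x|)\sim\log(e+|c_B|)+|{\log^+}(r_B/(1+|c_B|))|$-type bounds hold on $B$ one gets $F_B(\lz)\sim|B|/[\log(e+1/\lz)+\sup_{x\in B}\log(e+|x|)]$, whence solving $F_B(\lz)=1$ yields the first equivalence; the second equivalence in (i) follows from $\log(e+1/|B|)\sim|\log r_B|$ (up to the dimensional constant $n$) and $\sup_{x\in B}\log(e+|x|)\sim\log(e+|c_B|)+\log(e+r_B)\sim\log(e+|c_B|)+|\log r_B|$ when combined, after absorbing the $\log(e+r_B)$ term into $|\log r_B|+\log(e+|c_B|)$.

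I expect the main obstacle to be the bookkeeping when the ball $B$ is \emph{large} relative to its distance from the origin (so that $1+|x|$ ranges over a wide band on $B$): there one must dyadically decompose $B$ into annuli $B\cap\{2^{k}\le1+|x|<2^{k+1}\}$, estimate the contribution of each, and sum. When $n\az=n(1/p-1)\notin\nn$ the resulting series in $k$ is geometric and converges to the top-level term, reproducing $\min\{1,(r_B/(1+|c_B|))^{n\az}\}|B|$ cleanly; when $n\az\in\nn$ the series is (up to constants) harmonic over a range of length $\sim\log(1+|c_B|+r_B)$, producing precisely the extra $[\log(1+|c_B|+r_B)]^{-1}$ factor — and analogously $[\log(e+|c_B|)]^{-1}$-type behavior in the $p=1$ endpoint. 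Care is also needed to verify that $\Psi_\az(B)|B|$ is genuinely the \emph{unique} zero-level set value and not merely a one-sided bound, but this is automatic from strict monotonicity of $F_B$. Since Remark \ref{rem-new}(i) already records that $\Phi_p$ is of uniform upper type $1$ and uniform lower type $p$ and lies in $\mathbb A_1(\rn)$, and since the reference \cite{Ky14} (for $p=1$) and \cite{BCKLYY} (for $p\in(0,1)$) contain these computations, I would present the argument in the streamlined form above and cite those sources for the fully detailed geometric-series estimates rather than reproducing every line.
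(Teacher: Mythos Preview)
The paper does not prove this lemma at all; it simply records it as a known result, citing \cite{Ky14} for the case $p=1$ and \cite{BCKLYY} for $p\in(0,1)$. Your closing sentence (present a streamlined sketch and cite those two sources for the detailed estimates) is therefore exactly what the paper does, only the paper skips the sketch entirely.

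Your outline of the computation is essentially how those cited proofs proceed, but there is one slip worth flagging in the $p=1$ case: you wrote $F_B(\lz)\sim\int_B[\log(e+1/\lz)+\log(e+|x|)]^{-1}\,dx$, dropping the factor $1/\lz$ coming from the numerator of $\Phi_1(x,1/\lz)$. The correct expression is
\[
F_B(\lz)\sim\frac{1}{\lz}\int_B\frac{dx}{\log(e+1/\lz)+\log(e+|x|)},
\]
so setting $F_B(\lz)=1$ gives an \emph{implicit} equation for $\lz$, not a direct formula; one then substitutes the candidate $\lz_0=|B|/[\log(e+1/|B|)+\sup_{x\in B}\log(e+|x|)]$ and checks $F_B(\lz_0)\sim1$ using the upper/lower type of $\Phi_1$, exactly as you describe in your general strategy paragraph. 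With that correction your plan is sound.
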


For any $p\in(0,\,1]$ and any ball $B\subset\rn$, we still need to consider the Orlicz norm $\|\cdot\|_{L^{\phi_0}(\rn)}$ and  the weighted Lebesgue norm
$\|\cdot\|_{L_{W_p}^{p}(\rn)}$
estimates of the characteristic function $\chi_B$.

\begin{lemma}\label{lem-on}
Let $\phi_1$ be as in \eqref{Phip}, $W_1$ as in \eqref{Wp}, and ${\phi_0}$ as in \eqref{fai}. Define ${\phi_0}^{-1}$ to be
the inverse function of ${\phi_0}$.
For any $t\in(0,\,\fz)$, let
\begin{align}\label{f-rho}
\rho(t):=\frac{t^{-1}}{{\phi_0}^{-1}(t^{-1})}.
\end{align}
Then, for any ball $B\subset\rn$,
\begin{enumerate}
\item[{\rm (i)}]
$\|\chi_B\|_{L^{\phi_0}(\rn)}=|B|\rho(|B|)\sim \frac{|B|}{\log(e+{1}/{|B|})}$;

\item[{\rm (ii)}] $\|\chi_B\|_{L_{W_1}^1(\rn)}\sim
\frac{|B|}{\sup_{x\in B}\lf[\log(e+|x|)\r]}$,

\item[{\rm (iii)}] $
\|\chi_B\|^{-1}_{L^{\Phi_1}(\rn)}\sim
\|\chi_B\|^{-1}_{L^{\phi_0}(\rn)}+\|\chi_B\|^{-1}_{L_{W_1}^{1}(\rn)},
$
\end{enumerate}
where the equivalent constants in (i), (ii) and (iii) are positive and independent of $B$.
\end{lemma}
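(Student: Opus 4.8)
The plan is to compute each of the three norms directly from its definition, treating parts (i) and (ii) by elementary estimates and then deducing (iii) by combining them with Lemma \ref{lem-mon}(i). For part (i), recall that $\|\chi_B\|_{L^{\phi_0}(\rn)}=\inf\{\lz\in(0,\fz):\ \int_\rn\phi_0(\chi_B(x)/\lz)\,dx\le 1\}$, and since $\chi_B$ takes only the values $0$ and $1$, this reduces to $\inf\{\lz\in(0,\fz):\ |B|\,\phi_0(1/\lz)\le 1\}$. Writing $\lz=1/s$ and using that $\phi_0$ is strictly increasing, the condition $|B|\phi_0(s)\le 1$ is equivalent to $s\le\phi_0^{-1}(1/|B|)$, so the infimum of $1/s$ equals $1/\phi_0^{-1}(1/|B|)$; multiplying and dividing by $|B|$ gives $\|\chi_B\|_{L^{\phi_0}(\rn)}=|B|\,\rho(|B|)$ with $\rho$ as in \eqref{f-rho}. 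The asymptotic $|B|/\log(e+1/|B|)$ then follows from the two-sided bound $\phi_0^{-1}(u)\sim u\log(e+u)$, which one checks by substituting back into $\phi_0(t)=t/\log(e+t)$ and comparing $\log(e+u\log(e+u))$ with $\log(e+u)$ for $u\in(0,\fz)$.

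For part (ii), I would use \eqref{f-A1} from Remark \ref{rem-new}(ii), which says $W_1\in A_1(\rn)$: directly, $\|\chi_B\|_{L_{W_1}^1(\rn)}=\int_B W_1(x)\,dx=\int_B\frac{dx}{\log(e+|x|)}$. The $A_1$ condition gives $\frac1{|B|}\int_B W_1\ls\einf_{y\in B}W_1(y)=\einf_{y\in B}\frac1{\log(e+|y|)}=\frac1{\sup_{x\in B}\log(e+|x|)}$, and the reverse inequality is trivial since $W_1(x)\le\einf$ is false but $\int_B W_1\ge|B|\,\einf_{y\in B}W_1(y)$ always holds; hence $\int_B W_1\sim|B|/\sup_{x\in B}\log(e+|x|)$. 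One small point to record is that $\sup_{x\in B}\log(e+|x|)=\log(e+\sup_{x\in B}|x|)=\log(e+|c_B|+r_B)$, which matches the $\log(e+|c_B|)$ form appearing in Lemma \ref{lem-mon}(i) up to equivalent constants after one observes that the $|B|$-dependent term in that lemma is comparable to $|\log r_B|$; this bookkeeping is routine but should be stated.

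For part (iii), the identity to prove is $\|\chi_B\|_{L^{\Phi_1}(\rn)}^{-1}\sim\|\chi_B\|_{L^{\phi_0}(\rn)}^{-1}+\|\chi_B\|_{L_{W_1}^1(\rn)}^{-1}$. By parts (i) and (ii) the right-hand side is comparable to
$$
\frac{\log(e+1/|B|)}{|B|}+\frac{\sup_{x\in B}\log(e+|x|)}{|B|}
=\frac{\log(e+1/|B|)+\sup_{x\in B}\log(e+|x|)}{|B|},
$$
which is exactly the reciprocal of the quantity $\|\chi_B\|_{L^{\Phi_1}(\rn)}$ furnished by Lemma \ref{lem-mon}(i). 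The main obstacle is not any single estimate but making sure the various equivalent forms of the ``size'' of $B$ — namely $\log(e+1/|B|)$ versus $|\log r_B|$, and $\sup_{x\in B}\log(e+|x|)$ versus $\log(e+|c_B|)$ — are reconciled with uniform constants independent of $B$; in particular one must handle separately the regimes where $r_B$ is small, where $r_B$ is large, and where $|c_B|$ dominates or is dominated by $r_B$, so that the displayed sum is genuinely two-sided comparable to the single fraction in Lemma \ref{lem-mon}(i). Once that is in place, (iii) is immediate.
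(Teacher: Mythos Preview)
Your proof is correct and follows essentially the same approach as the paper: both compute (i) by evaluating the Luxemburg norm directly (the paper cites \cite{YY12} for the asymptotic while you verify $\phi_0^{-1}(u)\sim u\log(e+u)$ by hand), both derive (ii) from the $A_1$ condition \eqref{f-A1}, and both obtain (iii) by combining (i), (ii) with Lemma~\ref{lem-mon}(i). Your concern in (iii) about reconciling $\log(e+1/|B|)$ with $|\log r_B|$ and $\sup_{x\in B}\log(e+|x|)$ with $\log(e+|c_B|)$ is unnecessary, since the \emph{first} equivalence in Lemma~\ref{lem-mon}(i) already reads $\|\chi_B\|_{L^{\Phi_1}(\rn)}\sim |B|/[\log(e+1/|B|)+\sup_{x\in B}\log(e+|x|)]$, which matches your displayed sum exactly.
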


\begin{proof}
We first prove (i). Recall that the equivalence
$\|\chi_B\|_{L^{\phi_0}(\rn)}\sim \frac{|B|}{\log(e+{1}/{|B|})}$
was established in \cite[Lemma 7.13]{YY12}. Thus, to finish the proof of (i), it remains
to establish the first equality of (i). Indeed,
from the definition of $\rho$, it follows that
\begin{align*}
\dint_{\rn}{\phi_0}\lf(\frac{\chi_B(x)}{|B|\rho(|B|)}\r)\,dx
= \dint_{\rn}{\phi_0}\lf({\phi_0}^{-1}\lf(|B|^{-1}\r) \chi_B(x)\r)\,dx
=\int_B {\phi_0}\lf({\phi_0}^{-1}\lf(|B|^{-1}\r)\r)\,dx=1,
\end{align*}
which immediately implies the first equality of (i) and hence (i) holds true.

We now prove (ii). By the fact that $W_1$ belongs to the Muckenhoupt
weight class $A_1(\rn)$ (see \eqref{f-A1}), we know that
\begin{align*}
\|\chi_B\|_{L_{W_1}^1(\rn)}
=|B|\lf[\frac{1}
{|B|}\dint_{B}W_1(x)\,dx\r]\sim |B|\dinf_{x\in B} W_1(x)\sim |B|\dinf_{x\in B} \lf[\frac{1}{\log(e+|x|)}\r]\sim \frac{|B|}
{\sup_{x\in B} \log(e+|x|)},
\end{align*}
which implies that (ii) holds true.

Finally,  (iii) follows directly from  Lemma \ref{lem-mon}(i) and (i) and (ii) of this lemma.
This finishes the proof of Lemma \ref{lem-on}.
\end{proof}

%The following lemma gives the $L_{W_p}^{p}(\rn)$-norm
%estimates of $\chi_B$ for $p\in(0,\,1)$.

\begin{lemma}\label{lem-whn}
Let $p\in(0,\,1)$, $\Phi_p$ and $W_p$ be
respectively as in \eqref{Phip} and \eqref{Wp}. Then, for any ball $B\subset\rn$,
$$\|\chi_B\|_{L_{W_p}^p(\rn)}\sim\|\chi_B\|_{L^{\Phi_p}(\rn)},$$
where the equivalent constants  are positive and independent of $B$.
\end{lemma}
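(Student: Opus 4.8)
The plan is to compute both quasi-norms of $\chi_B$ explicitly and match them. For the weighted Lebesgue side, since $W_p\in A_1(\rn)$ by Remark \ref{rem-new}(ii), I would argue exactly as in the proof of Lemma \ref{lem-on}(ii): for any ball $B=B(c_B,r_B)$,
\begin{align*}
\|\chi_B\|_{L_{W_p}^p(\rn)}^p=\dint_B W_p(x)\,dx=|B|\lf[\frac1{|B|}\dint_B W_p(x)\,dx\r]\sim|B|\einf_{x\in B}W_p(x),
\end{align*}
so that $\|\chi_B\|_{L_{W_p}^p(\rn)}\sim|B|^{1/p}[\einf_{x\in B}W_p(x)]^{1/p}$. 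Using the explicit form of $W_p$ in \eqref{Wp}, the essential infimum over $B$ is attained (up to equivalent constants) at the point of $B$ farthest from the origin, so $\einf_{x\in B}W_p(x)\sim(1+|c_B|+r_B)^{-n(1-p)}$ when $n\alpha\notin\nn$ (here $\alpha=1/p-1$), and $\sim(1+|c_B|+r_B)^{-n(1-p)}[\log(e+|c_B|+r_B)]^{-p}$ when $n\alpha\in\nn$; a routine check shows $\sup_{x\in B}(1+|x|)\sim 1+|c_B|+r_B$ for any ball.

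For the Musielak--Orlicz side I would simply invoke Lemma \ref{lem-mon}(ii), which gives $\|\chi_B\|_{L^{\Phi_p}(\rn)}\sim\Psi_\alpha(B)|B|$ with $\Psi_\alpha(B)$ as stated there. It then remains to verify the numerical identity
\begin{align*}
\Psi_\alpha(B)|B|\sim|B|^{1/p}\lf[\einf_{x\in B}W_p(x)\r]^{1/p},
\end{align*}
i.e. $\Psi_\alpha(B)\sim|B|^{1/p-1}[\einf_{x\in B}W_p(x)]^{1/p}$. Writing $|B|\sim r_B^n$ and plugging in the above expressions, the right-hand side becomes $r_B^{n\alpha}(1+|c_B|+r_B)^{-n\alpha}=\lf(\frac{r_B}{1+|c_B|+r_B}\r)^{n\alpha}$ in the case $n\alpha\notin\nn$, with an extra factor $[\log(e+|c_B|+r_B)]^{-1}$ (note $\frac{p}{p}\cdot$ the $\log$ exponent collapses since the exponent on $W_p$ is $1/p$ and the $\log$ power inside $W_p$ is $p$, giving exactly one power of the logarithm) when $n\alpha\in\nn$. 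Finally I would observe that $\frac{r_B}{1+|c_B|+r_B}\sim\min\lf\{1,\frac{r_B}{1+|c_B|}\r\}$, since if $r_B\le 1+|c_B|$ then $1+|c_B|+r_B\sim 1+|c_B|$, while if $r_B>1+|c_B|$ then $1+|c_B|+r_B\sim r_B$ and both sides are $\sim 1$; likewise $\log(1+|c_B|+r_B)\sim\log(e+|c_B|+r_B)$ up to multiplicative constants. Matching these with the two branches of $\Psi_\alpha(B)$ completes the argument.

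The only mildly delicate point — the "main obstacle," such as it is — is the bookkeeping of the logarithmic factor in the case $n\alpha\in\nn$: one must check that raising $W_p(x)=(1+|x|)^{-n(1-p)}[\log(e+|x|)]^{-p}$ to the power $1/p$ produces precisely a single power of the logarithm, matching the single $[\log(1+|c_B|+r_B)]^{-1}$ appearing in $\Psi_\alpha(B)$, and that the $\log(e+\cdot)$ versus $\log(1+\cdot)$ discrepancy is harmless because $|c_B|+r_B$ is bounded below away from $0$ only after comparison (when $|c_B|+r_B$ is small, both $\log(e+|c_B|+r_B)\sim1$ and $\Psi_\alpha(B)\sim$ the non-logarithmic expression, since $\Psi_\alpha$ is capped by the $\min\{1,\cdot\}$). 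I would handle the small-ball/near-origin regime and the large-ball/far-from-origin regime separately to make these equivalences transparent. Everything else is a direct substitution, so the proof is short. This finishes the proof of Lemma \ref{lem-whn}.
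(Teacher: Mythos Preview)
Your proposal is correct and follows the same strategy as the paper: quote Lemma~\ref{lem-mon}(ii) for $\|\chi_B\|_{L^{\Phi_p}(\rn)}$, compute $\|\chi_B\|_{L_{W_p}^p(\rn)}$ using that $W_p\in A_1(\rn)$, and match the two expressions. The paper organizes the weighted-norm computation into three geometric cases ($|c_B|\ge 2r_B$; $|c_B|<2r_B<1$; $|c_B|<2r_B$ with $2r_B\ge1$) and in the last case carries out an explicit dyadic annular integration to get the upper bound on $\|\chi_B\|_{L_{W_p}^p(\rn)}$, whereas your uniform use of the $A_1$ estimate $\int_B W_p\sim|B|\,\einf_B W_p$ together with $\sup_{x\in B}(1+|x|)\sim 1+|c_B|+r_B$ bypasses that computation entirely; otherwise the arguments coincide. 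One small caution: your parenthetical reason for why the $\log(1+\cdot)$ versus $\log(e+\cdot)$ discrepancy is harmless (``$\Psi_\alpha$ is capped by the $\min\{1,\cdot\}$'') is not quite right, since the $\min$ acts only on the power factor and not on the logarithm; when you carry out the small-ball regime you will need to verify directly that both logarithmic factors are $\sim 1$ there.
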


\begin{proof}
Denote by $c_B$ the center of $B$ and $r_B$ its radius.
By Lemma \ref{lem-mon}(ii),
we have
\begin{align}\label{f-whn1}
\|\chi_B\|_{L^{\Phi_p}(\rn)}
\sim
\begin{cases}
|B| \min\lf\{1,\,\lf(\dfrac{r_B}{1+|c_B|}\r)^{n(1/p-1)}\r\}\quad&\textup{when}\; n(1/p-1)\notin\nn,\vspace{0.2cm}\\
|B| \min\lf\{1,\,\lf(\dfrac{r_B}{1+|c_B|}\r)^{n(1/p-1)}\r\}\dfrac1{\log(e+r_B+|c_B|)}
\quad&\textup{when}\; n(1/p-1)\in\nn.
\end{cases}
\end{align}
To estimate $\|\chi_B\|_{L_{W_p}^p(\rn)}$, we consider the following three cases.

\underline{\it Case (i): $|c_B|\ge 2r_B$.} In this case,
for any $x\in B$, it is easy to see that
$|x|\sim |c_B|$. By this and \eqref{f-whn1}, we conclude that, when $ n(1/p-1)\notin\nn$,
\begin{align*}
\lf\|\chi_B\r\|_{L_{W_p}^p(\rn)}
&=\lf\{\dint_B\frac{1}{(1+|x|)^{n(1-p)}}\,dx\r\}^{1/p}\\
&\sim \lf\{\dint_B\frac{1}{(1+|c_B|)^{n(1-p)}}\,dx\r\}^{1/p}
\sim \frac{|B|^{1/p}}{(1+|c_B|)^{n(1/p-1)}}\sim \|\chi_B\|_{L^{\Phi_p}(\rn)},
\end{align*}
as desired. Similarly, when $n(1/p-1)\in\nn$, we have
\begin{align*}
\lf\|\chi_B\r\|_{L_{W_p}^p(\rn)}
&=\lf\{\dint_B\frac{1}{(1+|x|)^{n(1-p)}\lf[\log(e+|x|)\r]^p}\,dx\r\}^{1/p}\\
&
\sim \frac{|B|^{1/p}}{(1+|c_B|)^{n(1/p-1)}\log(e+|c_B|)}\sim \|\chi_B\|_{L^{\Phi_p}(\rn)}.
\end{align*}

\underline{\it Case (ii): $|c_B|< 2r_B<1$.}
In this case, for any $x\in B$, we have
$|x|\le |x-c_B|+|c_B|<r_B+|c_B|<2.$
Thus, whenever  $n(1/p-1)$ is an integer or not, we always have
$$\dinf_{x\in B}W_p(x)
\sim 1.$$
From this and the fact that $W_p\in A_1(\rn)$ (see \eqref{f-A1}),
it follows that
\begin{align*}
\lf\|\chi_B\r\|_{L_{W_p}^p(\rn)}&
=|B|^{1/p}\lf[\frac{1}{|B|}\dint_BW_p(x)\,dx\r]^{1/p}\sim |B|^{1/p}\lf[\dinf_{x\in B}W_p(x)\r]^{1/p}
\sim |B|^{1/p}\sim \|\chi_B\|_{L^{\Phi_p}(\rn)},
\end{align*}
as desired.

\underline{\it Case (iii): $|c_B|< 2r_B$ and $2r_B\ge1$.}
In this case,  for any
$x\in B$, we have  $|x|\le |x-c_B|+|c_B|<r_B+|c_B|<3r_B$, so that $1+|x|\ls r_B$ and hence
\begin{align*}
\inf_{x\in B} W_p(x)
&=\begin{cases}
\displaystyle\inf_{x\in B} \dfrac1{(1+|x|)^{n(1-p)}}\quad&\textup{when}\; n(1/p-1)\notin\nn,\vspace{0.2cm}\\
\displaystyle\inf_{x\in B} \dfrac1{(1+|x|)^{n(1-p)}[\log(e+|x|)]^p}
\quad&\textup{when}\; n(1/p-1)\in\nn
\end{cases}\\
&\sim
\begin{cases}
 \dfrac1{|B|^{1-p}}\quad&\textup{when}\; n(1/p-1)\notin\nn,\vspace{0.2cm}\\
 \dfrac1{|B|^{1-p}[\log(e+r_B)]^p}
\quad&\textup{when}\; n(1/p-1)\in\nn.
\end{cases}
\end{align*}
Consequently,
\begin{align*}
\lf\|\chi_B\r\|_{L_{W_p}^p(\rn)}
&=|B|^{1/p}\lf[\frac{1}{|B|}\dint_BW_p(x)\,dx\r]^{1/p}
\sim |B|^{1/p}\lf[\dinf_{x\in B}W_p(x)\r]^{1/p}\\
&\gs \begin{cases}
 |B|\quad&\textup{when}\; n(1/p-1)\notin\nn,\vspace{0.2cm}\\
 \dfrac{|B|}{\log(e+r_B)}
\quad&\textup{when}\; n(1/p-1)\in\nn
\end{cases}\\
&
\sim  \|\chi_B\|_{L^{\Phi_p}(\rn)}.
\end{align*}
Also, when $n(1/p-1)\notin\nn$, we have
\begin{align*}
\lf\|\chi_B\r\|_{L_{W_p}^p(\rn)}
\le \lf[\int_{|x|<3r_B}\frac{1}{(1+|x|)^{n(1-p)}}\,dx\r]^{1/p}\sim \frac{|B|^{1/p}}{(1+3r_B)^{n(1/p-1)}}
\sim |B|\sim \|\chi_B\|_{L^{\Phi_p}(\rn)}.
\end{align*}
Meanwhile, when  $n(1/p-1)\in\nn$, we obtain
\begin{align*}
\lf\|\chi_B\r\|_{L_{W_p}^p(\rn)}
&\le \lf[\int_{|x|<3r_B}\frac{1}{(1+|x|)^{n(1-p)}[\log(e+|x|)]^p}\,dx\r]^{1/p}\\
&= \lf[\sum_{j=1}^\infty \int_{2^{-j}3r_B\le |x|<2^{-j+1}3r_B}\frac{1}{(1+|x|)^{n(1-p)}[\log(e+|x|)]^p}\,dx\r]^{1/p}\\
&\ls \lf[\sum_{j=1}^\infty \frac{(2^{-j} r_B)^n}{(1+2^{-j} r_B)^{n(1-p)}[\log(e+2^{-j} r_B)]^p}\r]^{1/p}\\
& \ls \lf[\sum_{j=1}^\infty \frac{j^p(2^{-j} r_B)^{np}}{[\log(e+ r_B)]^p} \r]^{1/p}
\ls \dfrac{|B|}{\log(e+r_B)}
\sim \|\chi_B\|_{L^{\Phi_p}(\rn)},
\end{align*}
where we used the following estimates:
$$
\frac{\log(e+ r_B)}{\log(e+2^{-j} r_B)}\le \log(e+2^{j})\ls j.
$$
Altogether, we find that $\lf\|\chi_B\r\|_{L_{W_p}^p(\rn)}\sim \|\chi_B\|_{L^{\Phi_p}(\rn)}$ in the case
$|c_B|< 2r_B$ and $2r_B\ge1$.

Summarizing the above three cases, we conclude that (ii) holds true.
This finishes the proof of Lemma \ref{lem-whn}.
\end{proof}

From Lemmas \ref{lem-mon}, \ref{lem-on}  and \ref{lem-whn},
we deduce some interesting properties on the
Musielak-Orlicz Hardy space $H^{\Phi_p}(\rn)$ for any $p\in(0,1]$.
To be precise, we first recall the following definition
of $H^{\Phi_p}$-atoms from \cite{Ky14, LY13}.

\begin{definition}\label{d5.4}
Let $p\in(0,\,1]$, $\Phi_p$ be as in \eqref{Phip}, $q\in(1,\fz]$ and $s\in\zz_+\cap[\lfz
n(1/p-1)\rfz,\,\fz)$.
\begin{enumerate}
\item[\rm(I)] For each ball $B\subset\rn$, the \emph{space} $L^q_{\Phi_p}(B)$ with
$q\in[1,\fz]$ is defined to be the set of all measurable functions
$f$ on $\rn$, supported in $B$, such that
\begin{align*}
\|f\|_{L^q_{\Phi_p}(B)}:=
\begin{cases}\dsup_{t\in (0,\fz)}
\lf[\frac{1}
{{\Phi_p}(B,t)}\dint_{\rn}|f(x)|^q\Phi_p(x,t)\,dx\r]^{1/q},& q\in [1,\fz);\\
\|f\|_{L^{\fz}(B)},&q=\fz
\end{cases}
\end{align*}
is finite,
where, for any measurable set $E$ and $t\in[0,\,\fz)$,
$
{\Phi_p}(E,t):=\int_{E}\Phi_p(x,\,t)\,dx.
$

\item[\rm(II)] A function $a$  is called a \emph{$(\Phi_p,\,q,\,s)$-atom}
if there exists a ball
$B\subset\rn$ such that

\begin{enumerate}
\item[\rm(i)] $\supp a\subset B$;

\item[\rm(ii)]
$\|a\|_{L^q_{\Phi_p}(B)}\le\|\chi_B\|_{L^{\Phi_p}(\rn)}^{-1}$;

\item[\rm(iii)] $\int_{\rn}a(x)x^{\az}\,dx=0$ for all
$\az\in\zz_+^n$ with $|\az|\le s$.
\end{enumerate}

\item[\rm(III)] The  \emph{atomic Musielak-Orlicz Hardy space}
$H^{\Phi_p,\,q,\,s}(\rn)$ is defined to be the space of all
$f\in\cs'(\rn)$ satisfying that $f=\sum_j\lz_ja_j$ in $\cs'(\rn)$,
where $\{\lz_j\}_j\subset\cc$ and  $\{a_j\}_j$ is a sequence of
$(\Phi_p,\,q,\,s)$-atoms, respectively, associated with balls $\{B_j\}_j$, satisfying
$$\sum_j\Phi_p\lf(B_j,\,\frac{|\lz_j|}{\|\chi_B\|_{L^{\Phi_p}(\rn)}}\r)<\fz.$$
Moreover, let
\begin{align*}
\blz_{\Phi_p}(\{\lz_j a_j\}_j):= \inf\lf\{\lz\in(0,\fz):\ \ \sum_j\Phi_p
\lf(B_j,\frac{|\lz_j|}{\lz\|\chi_{B_j}\|_{L^{\Phi_p}(\rn)}}\r)\le1\r\}.
\end{align*}
Then the \emph{quasi-norm} of $f\in H^{\Phi_p,\,q,\,s}(\rn)$
is defined by setting
\begin{align}\label{f-SN-1}
\|f\|_{H^{\Phi_p,\,q,\,s}(\rn)}:=\inf\lf\{
\blz_{\Phi_p}(\{\lz_ja_j\}_j)\r\},
\end{align}
where the infimum is taken over all the
decompositions of $f$ as above.
\end{enumerate}
\end{definition}

The following atomic characterization of $H^{\Phi_p}(\rn)$
follows from a general theory of the atomic characterization
of Musielak-Orlicz Hardy spaces established in \cite[Theorem~3.1]{Ky14}.

\begin{lemma}\label{l5.1}
Let $p\in(0,\,1]$, $\Phi_p$ be as in \eqref{Phip}, $q\in(1,\fz]$ and $s\in\zz_+\cap[\lfz
n(1/p-1)\rfz,\,\fz)$.  Then the spaces $H^{\Phi_p}(\rn)$ and $H^{\Phi_p,\,q,\,s}(\rn)$
coincide with equivalent quasi-norms.
\end{lemma}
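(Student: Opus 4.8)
The plan is to deduce Lemma \ref{l5.1} directly from the general atomic characterization of Musielak-Orlicz Hardy spaces established by Ky in \cite[Theorem~3.1]{Ky14}, by checking that the growth function $\Phi_p$ together with the indices $q$ and $s$ in the statement falls within the scope of that theorem, and that Definition \ref{d5.4} is nothing but a specialization of Ky's notion of atomic Musielak-Orlicz Hardy space to $\Phi=\Phi_p$.

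First I would recall from Remark \ref{rem-new}(i) that, for every $p\in(0,1]$, the function $\Phi_p$ in \eqref{Phip} is a Musielak-Orlicz function of uniformly upper type $1$ and of uniformly lower type $p$, and that $\Phi_p\in\mathbb A_1(\rn)\subset\mathbb A_\fz(\rn)$; hence $\Phi_p$ is an admissible growth function in the sense of \cite{Ky14}, with critical uniformly Muckenhoupt index $q(\Phi_p)=1$ and critical uniformly lower type index $i(\Phi_p)\ge p$. Next I would match the indices. With $m$ chosen as in Definition \ref{defn-Hp}(ii) to be the largest integer not greater than $n(1/p-1)$, one has $m=\lfz n(1/p-1)\rfz\ge\lfz n(q(\Phi_p)/i(\Phi_p)-1)\rfz$, so the non-tangential grand maximal function $f^\ast_m$ used to define $\|\cdot\|_{H^{\Phi_p}(\rn)}$ is an admissible choice in Ky's maximal-function definition, and the resulting space is independent of this choice. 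Likewise, the hypotheses $q\in(1,\fz]$ and $s\in\zz_+\cap[\lfz n(1/p-1)\rfz,\,\fz)$ of Lemma \ref{l5.1} imply the admissibility conditions $q\in(q(\Phi_p),\fz]$ and $s\ge\lfz n(q(\Phi_p)/i(\Phi_p)-1)\rfz$ required in \cite[Theorem~3.1]{Ky14}, since $q(\Phi_p)/i(\Phi_p)\le 1/p$.

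Then I would observe that a $(\Phi_p,q,s)$-atom in the sense of Definition \ref{d5.4}(II) is, term by term — support, normalization and vanishing moments — a $\Phi_p$-atom in Ky's sense: the local space $L^q_{\Phi_p}(B)$ in Definition \ref{d5.4}(I) together with the size condition (ii) is exactly Ky's normalization, and the quasi-norm \eqref{f-SN-1} is defined through the very functional $\blz_{\Phi_p}$ appearing in \cite{Ky14}. Consequently $H^{\Phi_p,\,q,\,s}(\rn)$ coincides, as a set and in quasi-norm, with the atomic space in \cite{Ky14}, and \cite[Theorem~3.1]{Ky14} yields $H^{\Phi_p}(\rn)=H^{\Phi_p,\,q,\,s}(\rn)$ with equivalent quasi-norms. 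The only point requiring genuine verification is the one in the previous paragraph, namely that $\Phi_p$ is an admissible growth function with the stated critical indices; this is precisely the content of Remark \ref{rem-new}(i) (established in \cite{Ky14} for $p=1$ and in \cite{BCKLYY} for $p\in(0,1)$), and everything else is a direct unravelling of definitions, so no further argument is needed beyond invoking \cite[Theorem~3.1]{Ky14}.
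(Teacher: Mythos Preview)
Your proposal is correct and matches the paper's own approach: the paper simply states that Lemma \ref{l5.1} follows from \cite[Theorem~3.1]{Ky14}, and you have done exactly this, supplying the verification (via Remark \ref{rem-new}(i)) that $\Phi_p$ is an admissible growth function with the right critical indices so that the hypotheses of Ky's theorem are met. Your write-up is in fact more detailed than the paper's one-line citation, but the substance is identical.
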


\begin{remark}\label{rem-at}
Let $p\in(0,\,1]$, $q\in (1,\,\fz)$ and
$s\in\zz_+\cap[\lfz n(1/p-1)\rfz,\,\fz)$. Assume that
${\phi_0}$ and $W_p$ are as in
\eqref{fai} and \eqref{Wp}, respectively.
Following Definition \ref{d5.4}(II),
if we replace $\Phi_p(x,\,t)$ therein respectively by $t^p$, $t^p W_p(x)$ and ${\phi_0}(t)$,
then we obtain the definitions of \emph{$(p,\,q,\,s)$-atoms},
\emph{$(p,\,q,\,s)_{W_p}$-atoms}
and \emph{$({\phi_0},\,q,\,s)$-atoms}.
Correspondingly, we follow Definition \ref{d5.4}(III) to introduce the
atomic Hardy spaces
$H^{p,\,q,\,s}(\rn)$, $H_{W_p}^{p,\,q,\,s}(\rn)$
and
$H^{\phi_0,\,q,\,s}(\rn)$
by replacing
the quasinorm in \eqref{f-SN-1}, respectively, by
\begin{align*}%\label{f-SN-2}
\|f\|_{H^{p,\,q,\,s}(\rn)}:=\inf\lf\{ \lf[\sum_{j\in\nn}|\lz_j|^p\r]^{1/p}\r\},
\end{align*}
\begin{align*}%\label{f-SN-3}
\|f\|_{H_{W_p}^{p,\,q,\,s}(\rn)}:=\inf\lf\{ \lf[\sum_{j\in\nn}|\lz_j|^p\r]^{1/p}\r\}
\end{align*}
and
\begin{align*}%\label{f-SN-4}
\|f\|_{H^{\phi_0,\,q,\,s}(\rn)}:=\inf\lf\{
\blz_{\phi_0}(\{\lz_ja_j\}_j)\r\},
\end{align*}
where
\begin{align*}%\label{f-SN0}
\blz_{{\phi_0}}(\{\lz_ja_j\}_j):= \inf\lf\{\lz\in(0,\fz):\ \ \sum_j
|B_j|\phi_0\lf(\frac{|\lz_j|}{\lz |B_j|\rho(B_j))}\r)\le1\r\}
\end{align*}
with $\rho$ as in \eqref{f-rho}. Then, from \cite[Theorem~3.1]{Ky14}, it also follows that
$$
\begin{cases}
H^p(\rn)= H^{p,\,q,\,s}(\rn)\\
H_{W_p}^p(\rn)=H_{W_p}^{p,\,q,\,s}(\rn)\\
H^{\phi_0}(\rn)= H^{\phi_0,\,q,\,s}(\rn)
\end{cases}
$$
with equivalent quasinorms. See also \cite{FS72,St93,Lu95,Vi87,JY10,Ga79,ST89}
and their references for more discussions on these three kinds of Hardy-type spaces.
\end{remark}

From these and Lemmas \ref{lem-mon}, \ref{lem-on}  and \ref{lem-whn}, we deduce the following proposition,
which is the basis
to prove Theorem \ref{thm1}.

\begin{proposition}\label{prop-new}
Let $p\in(0,\,1]$, $q\in (1,\,\fz)$
and $s\in\zz_+\cap[\lfz n(1/p-1)\rfz,\,\fz)$.
Let
$\Phi_p$,  ${\phi_0}$ and ${W_p}$
be as in \eqref{Phip}, \eqref{fai} and \eqref{Wp}, respectively.
Then, for any ball $B\subset \rn$ with center $c_B\in\rn$ and radius $r_B\in(0,\infty)$, the following assertions are true:
\begin{enumerate}
\item[\rm(i)] any $(\phi_0,\fz,s)$-atom or $(1,\fz,s)_{W_1}$-atom associated with the ball
$B$ is also a $(\Phi_1,\fz,s)$-atom associated with the same ball $B$;

\item[\rm(ii)] if $r_B<1$ and $|c_B|<1/r_B$, then any  $(\Phi_1,\fz,s)$-atom associated with the ball $B$
is also a $(\phi_0,\fz,s)$-atom associated with the same ball $B$;

\item[\rm(ii)] if $r_B<1$ and $|c_B|\ge 1/r_B$, or $r_B\ge 1$,
then any  $(\Phi_1,\fz,s)$-atom associated with the ball $B$
is also a $(1,\fz,s)_{W_1}$-atom associated with the same ball $B$;

\item[\rm (iv)]
when $p\in(0,1)$, any $(\Phi_p , \fz, s)$-atom associated with the ball $B$ is also a $(p, \fz, s)_{W_p}$-atom associated with
the same ball $B$, and vise versa.
\end{enumerate}

\end{proposition}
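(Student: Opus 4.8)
The plan is to verify each assertion directly from the definitions of the various atoms, reducing everything to the norm comparisons of characteristic functions collected in Lemmas \ref{lem-mon}, \ref{lem-on} and \ref{lem-whn}. Since all atoms under consideration are of order $q=\fz$, the size condition for a $(\Phi_p,\fz,s)$-atom $a$ supported in $B$ reads $\|a\|_{L^\fz(B)}\le\|\chi_B\|_{L^{\Phi_p}(\rn)}^{-1}$, and similarly for the other three families, with $\|\chi_B\|_{L^{\Phi_p}(\rn)}^{-1}$ replaced by $\|\chi_B\|_{L^{\phi_0}(\rn)}^{-1}$, $\|\chi_B\|_{L_{W_p}^p(\rn)}^{-1}$ or $|B|^{-1/p}$, respectively (note that for the $(p,\fz,s)$- and $(p,\fz,s)_{W_p}$-atoms, the relevant normalizing quantity is $\|\chi_B\|_{L^p(\rn)}^{-1}=|B|^{-1/p}$ and $\|\chi_B\|_{L_{W_p}^p(\rn)}^{-1}$, consistent with Remark \ref{rem-at}). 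The support condition (i) and the moment condition (iii) of Definition \ref{d5.4}(II) are identical across all four families, so in every case the only thing to check is that the $L^\fz(B)$-size bound of one atom implies that of another; equivalently, that the corresponding characteristic-function norms are comparable (or that one dominates the other) with constants independent of $B$. Thus each part of the proposition becomes a pointwise inequality between two such norms.

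For part (i), one uses Lemma \ref{lem-on}(iii), which gives $\|\chi_B\|_{L^{\Phi_1}(\rn)}^{-1}\sim\|\chi_B\|_{L^{\phi_0}(\rn)}^{-1}+\|\chi_B\|_{L_{W_1}^1(\rn)}^{-1}$; hence both $\|\chi_B\|_{L^{\phi_0}(\rn)}^{-1}$ and $\|\chi_B\|_{L_{W_1}^1(\rn)}^{-1}$ are bounded above by a constant multiple of $\|\chi_B\|_{L^{\Phi_1}(\rn)}^{-1}$, so a $(\phi_0,\fz,s)$-atom or a $(1,\fz,s)_{W_1}$-atom automatically satisfies the $(\Phi_1,\fz,s)$-atom size condition (after absorbing the harmless constant, which is permitted in the atomic theory since $H^{\Phi_1,\fz,s}(\rn)$ is insensitive to a uniform dilation of atoms). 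For parts (ii) and (iii), I would first compute, from Lemma \ref{lem-mon}(i) and Lemma \ref{lem-on}(i)–(ii), that $\|\chi_B\|_{L^{\Phi_1}(\rn)}\sim|B|/(|\log r_B|+\log(e+|c_B|))$, $\|\chi_B\|_{L^{\phi_0}(\rn)}\sim|B|/\log(e+1/|B|)\sim|B|/(1+|\log r_B|)$, and $\|\chi_B\|_{L_{W_1}^1(\rn)}\sim|B|/\log(e+|c_B|)$. In the regime $r_B<1$ and $|c_B|<1/r_B$ one has $\log(e+|c_B|)\ls|\log r_B|$, so the $\Phi_1$-denominator is comparable to the $\phi_0$-denominator, giving $\|\chi_B\|_{L^{\Phi_1}(\rn)}\sim\|\chi_B\|_{L^{\phi_0}(\rn)}$ and hence (ii); in the complementary regime ($r_B<1$, $|c_B|\ge1/r_B$, or $r_B\ge1$) one checks that $\log(e+|c_B|)\gs|\log r_B|$ (when $r_B\ge1$, $|\log r_B|=\log r_B$ and $\log(e+|c_B|)$ is at least a fixed constant, while $|B|\sim r_B^n$ makes the $\phi_0$-norm much larger than the $\Phi_1$-norm — one must be slightly careful here and split on whether $|c_B|$ is large or small), so the $\Phi_1$-denominator is comparable to the $W_1$-denominator, yielding $\|\chi_B\|_{L^{\Phi_1}(\rn)}\sim\|\chi_B\|_{L_{W_1}^1(\rn)}$ and hence (iii). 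Part (iv) is immediate from Lemma \ref{lem-whn}, which asserts $\|\chi_B\|_{L_{W_p}^p(\rn)}\sim\|\chi_B\|_{L^{\Phi_p}(\rn)}$ for $p\in(0,1)$, so the two size conditions are equivalent and the equivalence of the atom classes follows in both directions.

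The main obstacle is the careful case analysis of the denominators in parts (ii) and (iii): one has to track the three quantities $|\log r_B|$, $\log(e+|c_B|)$ and $\log(e+1/|B|)\sim|\log r_B|$ and verify that the stated geometric conditions on $(c_B,r_B)$ are exactly what makes the $\Phi_1$-denominator collapse to one of the other two, with comparability constants that do not depend on $B$. A subtle point is the case $r_B\ge1$ with $|c_B|$ bounded: here $|B|$ can be arbitrarily large but $\log(e+|c_B|)$ stays bounded, so one needs the observation that $|\log r_B|=\log r_B\le\log(e+1/|B|^{-1})$ is still controlled, and that the resulting loss is only a bounded multiplicative constant — again harmless for atoms. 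Once these elementary but fiddly estimates are in place, the comparison of $L^\fz$-size bounds is automatic and the proposition follows. I would organize the write-up as four short paragraphs, one per item, each reduced to the appropriate line from Section \ref{s2}.
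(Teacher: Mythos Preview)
Your approach is essentially the same as the paper's: reduce each assertion to a comparison of $\|\chi_B\|$ in the relevant norms, using Lemmas \ref{lem-mon}, \ref{lem-on}, \ref{lem-whn}. Parts (i) and (iv) are handled exactly as the paper does.

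There is one concrete slip in your treatment of (iii). You write $\|\chi_B\|_{L_{W_1}^1(\rn)}\sim|B|/\log(e+|c_B|)$, but Lemma \ref{lem-on}(ii) actually gives
\[
\|\chi_B\|_{L_{W_1}^1(\rn)}\sim\frac{|B|}{\sup_{x\in B}\log(e+|x|)},
\]
and $\sup_{x\in B}|x|\sim|c_B|+r_B$, not $|c_B|$. This is exactly the source of the ``subtle point'' you flag (the case $r_B\ge1$ with $|c_B|$ bounded): with the incorrect denominator $\log(e+|c_B|)$ the comparison genuinely fails there, and no amount of case-splitting will rescue it. With the correct denominator, the paper's argument is immediate: it compares the $\Phi_1$-denominator in the form $\log(e+1/|B|)+\sup_{x\in B}\log(e+|x|)$ (the first equivalence in Lemma \ref{lem-mon}(i)) to $\sup_{x\in B}\log(e+|x|)$. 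When $r_B\ge1$ one has $\log(e+1/|B|)\sim1\le\sup_{x\in B}\log(e+|x|)$; when $r_B<1$ and $|c_B|\ge1/r_B$ one has $\log(e+1/|B|)\ls\log(e+|c_B|)\le\sup_{x\in B}\log(e+|x|)$. No further splitting is needed. Once this formula is corrected your proof coincides with the paper's.
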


\begin{proof}
Let  $\chi_B$ be the characteristic function of the ball $B$.
By Lemma \ref{lem-on}(iii), we know that
$$\|\chi_B\|^{-1}_{L^{\phi_0}(\rn)}\ls \|\chi_B\|^{-1}_{L^{\Phi_1}(\rn)}$$
and
$$\|\chi_B\|^{-1}_{L_{W_1}^{1}(\rn)}\ls \|\chi_B\|^{-1}_{L^{\Phi_1}(\rn)}.$$
By these and the definitions of $(\Phi_1,\fz,\,s)$-atoms, $(\phi_0,\fz,s)$-atoms and $(1,\fz,s)_{W_1}$-atoms,
we know that any $(\phi_0,\fz,s)$-atom or $(1,\fz,s)_{W_1}$-atom associated with the ball
$B$ is also a $(\Phi_1,\fz,s)$-atom associated with $B$.
Hence, (i) holds true.

Now we show (ii). If $r_B<1$ and $|c_B|<1/r_B$, then, for any $x\in B$, it holds true that
$|x|\le |x-c_{B}|+|c_{B}|<1+\frac{1}{r_{B}}$, which implies that
\begin{align*}
\sup_{x\in B}\log(e+|x|)\le \log(e+1+1/r_{B})\sim \log(e+1/|B|).
\end{align*}
This, together with Lemmas \ref{lem-mon}(i) and \ref{lem-on}(i),
shows that
\begin{align*}
\lf\|\chi_{B}\r\|_{L^{\Phi_1}(\rn)}&\sim \frac{|B|}
{\log(e+1/|B|)+\sup_{x\in B}\log(e+|x|)}\sim \frac{|B|}
{\log(e+1/|B|)}\sim \lf\|\chi_{B}\r\|_{L^{\phi_0}(\rn)}.
\end{align*}
Then, by the definitions of $(\Phi_1,\fz,\,s)$-atoms and $(\phi_0,\fz,s)$-atoms,
we know that any  $(\Phi_1,\fz,s)$-atom associated with the ball $B$
is also a $(\phi_0,\fz,s)$-atom associated with  $B$. This finishes the proof of (ii).

To prove (iii),
 we claim that, if
$1/|c_B|\le r_B<1$  or $r_B\ge 1$,
then
\begin{align}\label{f-thm1-91}
\log\lf(e+1/|B|\r)+\sup_{x\in B}\log\lf(e+|x|\r)
\sim \sup_{x\in B}\log\lf(e+|x|\r).
\end{align}
Indeed, if $r_{B}\ge1$, then \eqref{f-thm1-91} holds true immediately.
If $1/|c_B|\le r_B<1$, then
$$
\log\lf(e+1/|B|\r)
\ls \log(e+|c_B|) \ls \sup_{x\in B}\log(e+|x|),
$$
whence leading to  \eqref{f-thm1-91}.  Thus, we conclude that
\begin{align*}
\lf\|\chi_{B}\r\|_{L^{\Phi_1}(\rn)}&\sim \frac{|B|}
{\log(e+1/|B|)+\sup_{x\in B}\log(e+|x|)}\sim \frac{|B|}
{\sup_{x\in B}\lf[\log\lf(e+|x|\r)\r]}\sim \lf\|\chi_{B}\r\|_{L_{W_1}^{1}(\rn)}.
\end{align*}
Then, applying the  definitions of $(\Phi_1,\fz,\,s)$-atom and $(1,\fz,s)_{W_1}$-atom, we see that any  $(\Phi_1,\fz,s)$-atom associated with the ball $B$
is also a $(1,\fz,s)_{W_1}$-atom associated with  $B$. This finishes the proof of (iii).

To show (iv), for any $p\in(0, 1)$, by the definitions of
$(\Phi_p , \fz, s)$-atoms and $(p, \fz, s)_{W_p}$-atoms
as well as Lemma \ref{lem-whn}, we immediately conclude that a function $a$ on
$\rn$ is a $(\Phi_p , \fz, s)$-atom associated with $B$ if and only
if a is a $(p, \fz, s)_{W_p}$-atom associated with the same ball $B$.
Thus, (iv) holds true, which completes the proof of Proposition \ref{prop-new}.
\end{proof}

We end this section by recalling
the following two lemmas, established in \cite{Ky14}, on the Calder\'on-Zygmund 
decomposition of the elements of Musielak-Orlicz Hardy spaces.

\begin{lemma}\label{lem-dp}
Let $p\in(0,\,1]$, $q\in(1,\,\fz)$ and $\Phi_p$
be as in \eqref{Phip}. Then $L_{\Phi_p(\cdot,1)}^q(\rn)\cap
H^{\Phi_p}(\rn)$ is dense in $H^{\Phi_p}(\rn)$.
\end{lemma}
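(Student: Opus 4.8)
The plan is to use the atomic characterization of $H^{\Phi_p}(\rn)$ in Lemma \ref{l5.1} and to show that, for an arbitrary $f\in H^{\Phi_p}(\rn)$, the partial sums of an atomic decomposition of $f$ already lie in $L^q_{\Phi_p(\cdot,1)}(\rn)\cap H^{\Phi_p}(\rn)$ and approximate $f$ in the $H^{\Phi_p}(\rn)$ quasi-norm.

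First I would fix $s\in\zz_+\cap[\lfz n(1/p-1)\rfz,\fz)$ and, by Lemma \ref{l5.1}, write $f=\sum_j\lz_ja_j$ in $\cs'(\rn)$, where $\{\lz_j\}_j\subset\cc$ and each $a_j$ is a $(\Phi_p,\fz,s)$-atom supported in a ball $B_j\subset\rn$, with $\blz_{\Phi_p}(\{\lz_ja_j\}_j)\ls\|f\|_{H^{\Phi_p}(\rn)}$. Choosing any $\lz_0>\blz_{\Phi_p}(\{\lz_ja_j\}_j)$, Definition \ref{d5.4}(III) gives the scalar estimate
\begin{align}\label{eq-dp-tail}
\sum_j\Phi_p\lf(B_j,\,\frac{|\lz_j|}{\lz_0\|\chi_{B_j}\|_{L^{\Phi_p}(\rn)}}\r)\le1,
\end{align}
where $\Phi_p(E,t):=\int_E\Phi_p(x,t)\,dx$. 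Then, setting $f_N:=\sum_{j=1}^N\lz_ja_j$ for $N\in\nn$, I would observe that each $(\Phi_p,\fz,s)$-atom is a bounded function supported in a ball, while $\Phi_p(\cdot,1)$ is comparable to the weight $W_p$ (Remark \ref{rem-new}(ii)), which is bounded and locally integrable; hence each atom, and therefore the finite sum $f_N$, belongs to $L^q_{\Phi_p(\cdot,1)}(\rn)$. Since $f_N$ is also a finite linear combination of $(\Phi_p,\fz,s)$-atoms, Lemma \ref{l5.1} yields $f_N\in H^{\Phi_p}(\rn)$, so that $f_N\in L^q_{\Phi_p(\cdot,1)}(\rn)\cap H^{\Phi_p}(\rn)$ for every $N$.

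The remaining task is to show $f_N\to f$ in $H^{\Phi_p}(\rn)$. Since $f-f_N=\sum_{j>N}\lz_ja_j$ in $\cs'(\rn)$ and $\{\lz_ja_j\}_{j>N}$ is an admissible atomic representation of $f-f_N$, Lemma \ref{l5.1} gives $\|f-f_N\|_{H^{\Phi_p}(\rn)}\ls\blz_{\Phi_p}(\{\lz_ja_j\}_{j>N})$, so it suffices to show that the latter tends to $0$ as $N\to\fz$. Here I would invoke that $\Phi_p$ is of uniformly upper type $1$ (Remark \ref{rem-new}(i)): for any $\epz\in(0,1)$ and each $j$, since $1/\epz\ge1$,
$$\Phi_p\lf(B_j,\,\frac{|\lz_j|}{\epz\lz_0\|\chi_{B_j}\|_{L^{\Phi_p}(\rn)}}\r)\ls\frac1\epz\,\Phi_p\lf(B_j,\,\frac{|\lz_j|}{\lz_0\|\chi_{B_j}\|_{L^{\Phi_p}(\rn)}}\r);$$
summing over $j>N$ and using that the series in \eqref{eq-dp-tail} converges, its tail can be made arbitrarily small for $N$ large, so the left-hand sum is at most $1$, whence $\blz_{\Phi_p}(\{\lz_ja_j\}_{j>N})\le\epz\lz_0$ for all such $N$. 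Letting $N\to\fz$ and then $\epz\to0$ completes the argument.

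The main obstacle is precisely this last step: passing from the convergence of the scalar series \eqref{eq-dp-tail} to the vanishing of the Luxemburg-type functional $\blz_{\Phi_p}$, where the uniform upper type $1$ of $\Phi_p$ (a uniform $\Delta_2$-type condition, recorded in Remark \ref{rem-new}(i)) does the essential work. The other ingredients --- that $(\Phi_p,\fz,s)$-atoms lie in $L^q_{\Phi_p(\cdot,1)}(\rn)$ and that finite atomic sums lie in $H^{\Phi_p}(\rn)$ --- are immediate from Definition \ref{d5.4} and Lemma \ref{l5.1}.
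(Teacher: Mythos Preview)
The paper does not give its own proof of Lemma~\ref{lem-dp}; it is merely recalled from \cite{Ky14} (see the sentence introducing Lemmas~\ref{lem-dp} and \ref{lem-CZd}). Your argument is correct: each $(\Phi_p,\fz,s)$-atom is bounded with compact support and $\Phi_p(\cdot,1)\sim W_p\in L^\fz(\rn)$, so the partial sums $f_N$ lie in $L^q_{\Phi_p(\cdot,1)}(\rn)\cap H^{\Phi_p}(\rn)$; and the uniformly upper type~$1$ of $\Phi_p$ (Remark~\ref{rem-new}(i)) converts tail convergence of the scalar series in Definition~\ref{d5.4}(III) into $\blz_{\Phi_p}(\{\lz_ja_j\}_{j>N})\to 0$, whence $f_N\to f$ in $H^{\Phi_p}(\rn)$ by Lemma~\ref{l5.1}.

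The route in \cite{Ky14} is different in its logical order: there density is obtained \emph{before} the atomic characterization, via the Calder\'on--Zygmund decomposition (the ``good'' parts $g^k$ in Lemma~\ref{lem-CZd} are bounded, hence lie in $L^q_{\Phi_p(\cdot,1)}(\rn)$, and one shows $g^k\to f$ in $H^{\Phi_p}(\rn)$ as $k\to\fz$); the atomic decomposition is then established first on the dense subspace and extended by density. Your argument instead takes Lemma~\ref{l5.1} as a black box and deduces density a~posteriori. This is shorter and perfectly legitimate in the present paper, since Lemma~\ref{l5.1} is quoted independently from \cite{Ky14}; the only thing to be aware of is that one cannot use your argument to \emph{replace} the proof in \cite{Ky14}, as that would be circular.
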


\begin{lemma}\label{lem-CZd}
Let $p\in(0,\,1]$, $q\in(1,\,\fz)$,
$s\in \zz_+\cap [\lfloor n(1/p-1)\rfloor,\,\fz)$ and
$\Phi_p$ be as in \eqref{Phip}.
For any $f\in L_{\Phi_p(\cdot,1)}^q(\rn)\cap
H^{\Phi_p}(\rn)$, there exist  family $\{\Lambda_k\}_{k\in\zz}$ of index set with elements of
countable numbers,
$\{g_i^k\}_{k\in\zz,i\in\Lambda_k}$ and $\{b_i^k\}_{k\in\zz,i\in\Lambda_k} \subset \mathcal{S}'(\rn)$
such that
\begin{enumerate}

\item[{\rm(i)}] for any $k\in\zz$, $f=g^k+\sum_{i\in\Lambda_k}b_i^k$ in $\mathcal{S}'(\rn)$;

\item[{\rm(ii)}] $f=\dsum_{k\in\zz} (g^{k+1}-g^k)$ in $\mathcal{S}'(\rn)$;

\item[{\rm(iii)}] for any $k\in\zz$, there exists a family $\{b^k_i\}_{i\in\Lambda_k}
\subset L^\fz(\rn)$ such that $g^{k+1}-g^{k}=\sum_{i\in\Lambda_k} h^k_i$ in
$\mathcal{S}'(\rn)$;

\item[{\rm (iv)}]  for any $k\in\zz$ and $i\in\Lambda_k$, $h_i^k$ satisfies

\begin{enumerate}

\item [{\rm (a)}] $\supp h_i^k\subset B_i^k$, where $B_i^k:=18\wz B_i^k$
and  $\{\wz B_i^k\}_{i\in \Lambda_k}$ is a Whitney covering of $\Omega_k$ with
\begin{align*}%\label{f-omegak}
\Omega_k:=\lf\{x\in\rn:\ f^*(x)>2^k\r\},
\end{align*}
where $f^*$ denotes the non-tangential maximal function
of $f$ as in \eqref{grand-funct} with $m$ therein
equal to $\lfloor n(1/p-1)\rfloor$;

\item [{\rm (b)}] $\|h_i^k\|_{L^\fz(\rn)}\le c2^k$, where $c$ is a positive constant independent of
$k$, $i$ and $f$.

\item [{\rm (c)}] for any multi-index $\az$ satisfying
$|\az|\le s$, it holds true that
$
\int_{\rn} x^\az h_i^k(x)\,dx=0.
$

\end{enumerate}
\end{enumerate}

\end{lemma}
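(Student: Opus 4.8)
The plan is to reproduce, in the Musielak--Orlicz setting, the Calder\'on--Zygmund decomposition of $H^p(\rn)$ of Fefferman and Stein \cite{FS72} (see also \cite{St93}), which is exactly what is carried out in \cite{Ky14}; here I only sketch the main steps. Fix $f\in L^q_{\Phi_p(\cdot,1)}(\rn)\cap H^{\Phi_p}(\rn)$ and note that $f^*$, being the grand maximal function of \eqref{grand-funct} with $m=\lfz n(1/p-1)\rfz$, is lower semicontinuous, so each $\Omega_k:=\{x\in\rn:\ f^*(x)>2^k\}$ is open; since $f^*\in L^{\Phi_p}(\rn)$, also $\Omega_k\neq\rn$. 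For each $k\in\zz$, a Whitney covering lemma furnishes a countable family $\{\wz B_i^k\}_{i\in\Lambda_k}$ of balls such that the concentric dilates $B_i^k:=18\wz B_i^k$ satisfy $\bigcup_i\wz B_i^k=\Omega_k$, have bounded overlap with a constant depending only on $n$, are contained in $\Omega_k$, and obey the Whitney separation $\dist(B_i^k,\rn\setminus\Omega_k)\sim r_{B_i^k}$. To this covering attach a smooth partition of unity $\{\zeta_i^k\}_{i\in\Lambda_k}$ with $\supp\zeta_i^k\subset B_i^k$, $0\le\zeta_i^k\le1$, $\sum_i\zeta_i^k=\chi_{\Omega_k}$ and $\|D^\az\zeta_i^k\|_{L^\fz(\rn)}\ls r_{B_i^k}^{-|\az|}$ for $|\az|\le s+1$.

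Next I would build in the vanishing moments by a polynomial correction: for each $k,i$ let $P_i^k$ be the unique polynomial of degree at most $s$ with $\int_{\rn}[f(x)-P_i^k(x)]q(x)\zeta_i^k(x)\,dx=0$ for every polynomial $q$ of degree at most $s$ (legitimate since here $f$ is a locally integrable function), and set $b_i^k:=(f-P_i^k)\zeta_i^k$, $g^k:=f-\sum_{i\in\Lambda_k}b_i^k$, so that $g^k=f\chi_{\rn\setminus\Omega_k}+\sum_i P_i^k\zeta_i^k$. For the telescoping terms, using $\Omega_{k+1}\subset\Omega_k$ and $\sum_i\zeta_i^k=\chi_{\Omega_k}$ one writes $g^{k+1}-g^k=\sum_{i\in\Lambda_k}b_i^k-\sum_{j\in\Lambda_{k+1}}b_j^{k+1}$ and redistributes the second sum over the Whitney balls of $\Omega_k$, setting $h_i^k:=b_i^k-\sum_{j\in\Lambda_{k+1}}\big(\zeta_i^k b_j^{k+1}-P_{i,j}^{k+1}\zeta_j^{k+1}\big)$, where $P_{i,j}^{k+1}$ is the degree-$\le s$ polynomial projection of $\zeta_i^k b_j^{k+1}$ against the measure $\zeta_j^{k+1}\,dx$, chosen precisely so that each $h_i^k$ retains vanishing moments up to order $s$ and $\sum_i h_i^k=g^{k+1}-g^k$. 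Because every $B_j^{k+1}$ meeting $B_i^k$ has radius comparable to $r_{B_i^k}$ and only boundedly many do, $h_i^k$ is supported in a fixed dilate of $B_i^k$ (which one calls $B_i^k$ in the statement). This gives (i), (ii), (iii), (iv)(a) and (iv)(c) by bookkeeping.

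The technical heart, and the step I expect to be the main obstacle, is the uniform bound $\|h_i^k\|_{L^\fz(\rn)}\le c\,2^k$ with $c$ independent of $k,i,f$ (item (iv)(b)); the same mechanism yields $\|g^k\|_{L^\fz(\rn)}\ls2^k$. Two pointwise facts drive it: first, $|f(x)|\le f^*(x)$ for a.e.\ $x\in\rn$ (every point is a Lebesgue point of the locally integrable $f$), whence $|f|\le2^k$ a.e.\ on $\rn\setminus\Omega_k$; second, for a Whitney ball $B_i^k$ there is $y_i^k\in\rn\setminus\Omega_k$ with $|y_i^k-c_{B_i^k}|\ls r_{B_i^k}$, and a reproducing/mean-value estimate shows $\|P_i^k\zeta_i^k\|_{L^\fz(B_i^k)}\ls f^*(y_i^k)\le2^k$ (the normalized bumps built from $\zeta_i^k$ and the dual polynomials of $P_i^k$ lie, after translation and dilation, in a bounded subset of $\cs_m(\rn)$, so that $f^*(y_i^k)$ controls the relevant local averages of $f$). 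Combining these controls $\|b_i^k\|_{L^\fz(\rn)}\ls2^k$; for the differences one further estimates $\|\zeta_i^k b_j^{k+1}\|_{L^\fz(\rn)}$ and $\|P_{i,j}^{k+1}\zeta_j^{k+1}\|_{L^\fz(\rn)}$ by $\ls2^{k+1}\sim2^k$ via the comparable-radii property, summing over the boundedly many relevant $j$ to close (iv)(b). Finally, convergence of $f=g^k+\sum_i b_i^k$ and $f=\sum_k(g^{k+1}-g^k)$ in $\cs'(\rn)$ follows by pairing with a Schwartz function, using the bounded overlap and the size estimates together with $g^k\to0$ ($k\to-\infty$) and $g^k\to f$ ($k\to+\infty$) in $\cs'(\rn)$, consequences of $f^*\in L^{\Phi_p}(\rn)$ and $f\in L^q_{\Phi_p(\cdot,1)}(\rn)$. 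For the complete details I would refer to \cite{Ky14}.
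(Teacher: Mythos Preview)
The paper does not prove this lemma at all; it simply records it as a known result ``established in \cite{Ky14}'', and your sketch is precisely the Fefferman--Stein construction carried out there, so the approach matches.

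One concrete slip in your outline of (iv)(b) is worth flagging: the claim $\|b_i^k\|_{L^\fz(\rn)}\ls 2^k$ is false, since $b_i^k=(f-P_i^k)\zeta_i^k$ lives on $B_i^k\subset\Omega_k$, where $|f|$ is \emph{not} controlled by $2^k$. The correct mechanism (and what \cite{Ky14} actually does) is to expand $h_i^k$ and observe that, after the telescoping, the raw $f$ survives only against $\zeta_i^k(1-\sum_j\zeta_j^{k+1})=\zeta_i^k\chi_{\rn\setminus\Omega_{k+1}}$, where $|f|\le 2^{k+1}$; the remaining pieces are the polynomial terms $P_i^k\zeta_i^k$, $P_j^{k+1}\zeta_j^{k+1}$, $P_{i,j}^{k+1}\zeta_j^{k+1}$, each bounded by a constant times $2^k$ via the grand-maximal argument you describe. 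Also, the radii of the $B_j^{k+1}$ meeting $B_i^k$ are only \emph{dominated} by $r_{B_i^k}$ (not comparable), and there may be unboundedly many such $j$; what saves the $L^\fz$ bound is the bounded overlap of $\{\zeta_j^{k+1}\}_j$ at each point, not finiteness of the index set. With these corrections your sketch is accurate.
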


\section{Proof of Theorem \ref{thm1}}\label{s3}

\hskip\parindent
Based on the technical lemmas established in
Section \ref{s2}, we now prove Theorem \ref{thm1}
by considering  two cases: $p=1$ and $p\in(0, 1)$. We point out that  these two cases
are based on different selection principles to obtain the desired
sum space.

\begin{proof}[Proof of Theorem \ref{thm1} in the case $p=1$]
We first establish the inclusion
$H^{\phi_0}(\rn)+H_{W_1}^1(\rn)\subset H^{\Phi_1}(\rn)$.
For any $f\in H^{\phi_0}(\rn)+H_{W_1}^1(\rn)$, let $f_0\in H^{\phi_0}(\rn)$
and $f_1\in H_{W_1}^1(\rn)$ satisfy $f=f_0+f_1$ in $\mathcal{S}'(\rn)$ and
\begin{align*}
\|f\|_{H^{\phi_0}(\rn)+H_{W_1}^1(\rn)}\sim \|f_0\|_{H^{\phi_0}(\rn)}
+\|f_1\|_{H_{W_1}^1(\rn)}.
\end{align*}
Using \eqref{Phip}, we know that, for any $x\in\rn$ and $t\in(0,\,\fz)$,
\begin{align*}
\Phi_1(x,\,t)\ls
\min\lf\{\frac{t}{\log(e+t)},\,\frac{t}
{\log(e+|x|)}\r\},
\end{align*}
which, combined with the grand maximal function characterizations of these Hardy-type spaces, shows that
\begin{align*}
\|f\|_{H^{\Phi_1}(\rn)}&\ls \|f_0\|_{H^{\Phi_1}(\rn)}+\|f_1\|_{H^{\Phi_1}(\rn)}
\ls \|f_0\|_{H^{\phi_0}(\rn)}+\|f_1\|_{H_{W_1}^1(\rn)}
\sim \|f\|_{H^{\phi_0}(\rn)+H_{W_1}^1(\rn)}.
\end{align*}
This immediately implies the inclusion
$H^{\phi_0}(\rn)+H_{W_1}^1(\rn)\subset H^{\Phi_1}(\rn)$.

We now prove the converse inclusion
$H^{\Phi_1}(\rn)\subset H^{\phi_0}(\rn)+H_{W_1}^1(\rn)$. Without loss of generality, we may assume that
$f\in H^{\Phi_1}(\rn)$ and $\|f\|_{H^{\Phi_1}(\rn)}=1$; otherwise,
we use $\wz f:=f/\|f\|_{H^{\Phi_p}(\rn)}$ to replace $f$
in the same argument as below.

Let $s\in\zz_+$ and $s\ge n(1/p-1)$,  by Lemma \ref{l5.1},
we know that there exist $\{a_j\}_{j\in\nn}$ of $(\Phi_1,\,\fz,\,s)$-atoms and $\{\lz_j\}_{j\in\nn}\subset \cc$
such that
\begin{align}\label{f-thm1-1}
f=\dsum_{j\in\nn}\lz_j a_j
\end{align}
in $H^{\Phi_1}(\rn)$ and hence in $\mathcal{S}'(\rn)$, and
$\blz_{\Phi_1}(\{\lz_j a_j\}_j)\le 2$.
Since $\Phi_1$ of of uniformly lower type $1$ and of uniformly upper type $1$, it follows easily that, for any $x\in\rn$ and $s,t\in(0,\infty)$,
$$\Phi_1(x, st)\sim s\Phi_1(x,t).$$
By this, the fact $\blz_{\Phi_1}(\{\lz_j a_j\}_j)\le 2$ and \cite[Lemma~4.2(i)]{Ky14}, we conclude that
\begin{align}\label{eq-x1}
1\ge \sum_j\Phi_1
\lf(B_j,\frac{|\lz_j|}{2\|\chi_{B_j}\|_{L^{\Phi_1}(\rn)}}\r)
&=\sum_j\int_{B_j}\Phi_1\lf(x,\, \frac{|\lz_j|}{2\|\chi_{B_j}\|_{L^{\Phi_1}(\rn)}}\r)\,dx\\
&\sim \sum_j|\lz_j|\int_{B_j}\Phi_1\lf(x,\, \frac{1}{\|\chi_{B_j}\|_{L^{\Phi_1}(\rn)}}\r)\,dx\sim \sum_j|\lz_j|.\notag
\end{align}

For any $j\in\nn$, assume that  $a_j$ is supported on a ball $B_j:=B(c_j,\,r_{j})$, where $c_j\in\rn$ and $r_j\in(0,\infty)$.
Define
$$
{\rm I}_0:=\lf\{j:\, r_{j}<1\;\textup{and}\; |c_{j}|<\frac{1}{r_j}\r\}\quad
\textup{and}\quad
{\rm I}_1:=\lf\{j:\, r_{j}<1\;\textup{and}\; |c_{j}|\ge\frac{1}{r_j},\,\textup{or}\; r_j\ge 1\r\}.
$$
It is easy to see that
$\mathrm{I}_0\cap \mathrm{I}_1=\emptyset$.
%In what follows, to distinguish index from ${\rm I}_0$ or ${\rm I}_1$,
%when $j\in {\rm I}_0$, we write $j$ as $j_0$; when $j\in {\rm I}_1$, we write $j$ as $j_1$.
We now write the decomposition in \eqref{f-thm1-1}
into
\begin{align}\label{f-thm1-1-1}
f&=\dsum_{j\in\nn}\lz_j a_j
=\dsum_{j\in\mathrm{I}_0}\lz_{j} a_{j}+\dsum_{j\in\mathrm{I}_1}\lz_{j} a_{j}
=:f_0+f_1.
\end{align}
Thus, by Proposition \ref{prop-new}, we know that, for any $j\in\mathrm{I}_0$,
$a_{j}$ is a $(\phi_0,\,\fz,\,s)$-atom associated with the ball $B_{j}$
and, for any $j\in \mathrm{I}_1$, $a_{j}$ is a $(1,\,\fz,\,s)_{W_1}$-atom associated with $B_{j}$.

We now  show $f_0\in H^{\phi_0}(\rn)$.
For any $j\in I_0$, by  $r_{j}<1$  and Lemma \ref{lem-on}(i), we know that
\begin{align*}
|{B_{j}}|{\phi_0}\lf(\frac{1}{|B_{j}|\rho(|B_{j}|)}\r)
&\sim |{B_{j}}|{\phi_0}\lf(\frac{\log(e+1/|B_{j}|)}{|B_{j}|}\r)
\sim \frac{\log(e+1/|B_{j}|)}{\log(e+\frac{\log(e+1/|B_{j}|)}{|B_{j}|})}
\ls 1,
\end{align*}
which, together with the fact that ${\phi_0}$ is of lower type $1$, further implies that
\begin{align*}
\dsum_{j\in\mathrm{I}_0}|B_{j}|
{\phi_0}\lf(\frac{|\lz_{j}|}{\sum_{j\in\mathrm{I}_0}|\lz_{j}||B_{j}|
\rho(|B_{j}|)}\r)
&\ls\dsum_{j\in\mathrm{I}_0}
\frac{|\lz_{j}|}
{\sum_{j\in\mathrm{I}_0}|\lz_{j}|}
|{B_{j}}|{\phi_0}\lf(\frac{1}{|B_{j}|\rho(|B_{j}|)}\r)\ls 1.
\end{align*}
From this and \eqref{eq-x1}, it follows that
\begin{align*}%\label{f-thm1-7}
\Lambda_{\phi_0}\lf(\{\lz_{j}a_{j}\}_{j\in\mathrm{I}_0}\r)
&=\dinf \lf\{\lz\in(0,\,\fz):\ \dsum_{j\in\mathrm{I}_0}|{B_{j}}|
{\phi_0}\lf(\frac{|\lz_{j}|}{\lz|B_{j}|\rho(|B_{j}|)}\r)\le 1\r\}
\ls\dsum_{j\in\mathrm{I}_0}\lf|\lz_{j}\r|\ls 1.
\end{align*}
Thus, $f_0\in H^{\phi_0}(\rn)$ and $\|f_0\|_{H^{\phi_0}(\rn)}\ls \Lambda_{\phi_0}\lf(\{\lz_{j}a_{j}\}_{j\in\mathrm{I}_0}\r)\ls 1\sim \|f\|_{H^{\Phi_1}(\rn)}$.

For $f_1$, using the atomic characterization of $H_{W_1}^1(\rn)$
stated in Remark \ref{rem-at}, \eqref{eq-x1} and \eqref{f-thm1-1-1},
we find that $f_1\in H_w^1(\rn)$ and
\begin{align*}
\lf\|f_1\r\|_{H_w^1(\rn)}&\ls \dsum_{j\in\mathrm{I}_1} \lf|\lz_{j}\r|
\ls 1\sim \lf\|f\r\|_{H^{\Phi_1}(\rn)}.
\end{align*} 

Thus, we conclude that for any $f\in H^{\Phi_1}(\rn)$,
there exist $f_0\in H^{\phi_0}(\rn)$ and $f_1\in H_{W_1}^1(\rn)$ such that
$f=f_0+f_1$ in $\mathcal{S}'(\rn)$ and
$\|f_0\|_{H^{\phi_0}(\rn)}+\|f_1\|_{H_{W_1}^1(\rn)}\ls \|f\|_{H^{\Phi_1}(\rn)}$.
This finishes the proof of the converse inclusion $H^{\Phi_1}(\rn)\subset H^{\phi_0}(\rn)+H_{W_1}^1(\rn)$
and hence   of Theorem \ref{thm1} in the case $p=1$.
\end{proof}

We now turn to the proof of Theorem \ref{thm1} under the case $p\in (0,\,1)$.

\begin{proof}[Proof of Theorem \ref{thm1} under the case $p\in(0,\,1)$.]
Let $p\in(0,\,1)$. For any $x\in\rn$ and $t\in(0,\,\fz)$, using \eqref{Phip} and \eqref{Phip-1}, we observe that
$$\Phi_{p}(x,\,t)\ls \min\lf\{\phi_0(t),\,{t^p}{W_{p}(x)}\r\}\qquad \textup{and}\qquad \phi_0(t)\le t.$$
From these observations, we argue as in the case $p=1$ and can obtain the inclusions
$$H^1(\rn)+H_{W_{p}}^p(\rn)\subset H^{\phi_0}(\rn)+H_{W_{p}}^p(\rn)\subset H^{\Phi_{p}}(\rn),$$
with desired norm estimates.

It remains to prove $H^{\Phi_p}(\rn)\subset H^1(\rn)+H_{W_p}^p(\rn)$.
Due to similarity,  we only consider the case $n(1/p-1)\in\nn$.
Consider first the case $f\in L^q_{\Phi_{p}(\cdot,1)}(\rn)
\cap H^{\Phi_{p}}(\rn)$. Without loss of generality,
we may also assume that $\|f\|_{H^{\Phi_p}(\rn)}=1$.
%By Lemma \ref{lem-dp}, it suffices to consider
%Now, we prove the converse inclusion
%$H^{\Phi_p}(\rn)\subset H^1(\rn)+H_{W_p}^p(\rn)$.

Let $q\in(1,\,\fz)$ and
$s\in\zz_+\cap [\lfloor n(1/p-1)\rfloor,\,\fz)$. Applying Lemma \ref{lem-CZd},  there exist
families $\{\Lambda_k\}_{k\in\zz}$ of index sets, $\{h^k_{i}\}_{k\in\zz,i\in\Lambda_k}$
of functions in $L^\fz(\rn)$ and $\{B_i^k\}_{k\in\zz,i\in\Lambda_k}$ of balls such that
\begin{align}\label{f-thm2-1}
f=\dsum_{k\in\zz}\dsum_{i\in\Lambda_k}h_i^k\qquad\textup{in}\;\;\cs'(\rn).
\end{align}
Define
\begin{align}\label{f-E}
\mathrm{E}:=\lf\{x\in\rn:\ f^*(x)<(1+|x|)^{-n}
[\log(e+|x|)]^{-p/(1-p)}\r\},
\end{align}
where $f^*$ denotes the non-tangential maximal
function as in \eqref{grand-funct} with $m:=n(1/p-1)$.
For any $k\in\zz$ and $i\in\Lambda_k$, define
$$B_{i,\mathrm{E}}^k:=B_i^k\cap \mathrm{E}\qquad\textup{and}\qquad
B_{i,\mathrm{E}^\complement}^k:=B_i^k\cap \mathrm{E}^\complement.$$
Let
$$
{\rm I}_0:=\lf\{(k,i):\ \lf|B_{i,\mathrm{E}}^k\r|\ge \frac12\lf|B_i^k\r|\r\}
\qquad\textup{and}\qquad
{\rm I}_1:=\lf\{(k,i):\ \lf|B_{i,\mathrm{E}^\complement}^k\r|\ge \frac12\lf|B_i^k\r|\r\}.
$$
It is easy to see that $\mathrm{I}_0\cap \mathrm{I}_1=\emptyset$
and
\begin{align*}
\dsum_{k\in\zz}\dsum_{i\in\Lambda_k}=\dsum_{(k,i)\in\mathrm{I}_0}
+\dsum_{(k,i)\in\mathrm{I}_1}.
\end{align*}

For any fixed $k_0\in\zz$, by Lemma \ref{lem-CZd},
it holds true that
\begin{align}\label{f-thm2-2}
\dsum_{(k_0,i)\in \mathrm{I}_0} |B_i^{k_0}|\le 2 \dsum_{(k_0,i)\in \mathrm{I}_0}\lf|B^{k_0}_{i,\mathrm{E}}\r|
\ls \lf|\lf\{x\in \mathrm{E}:\ f^*(x)>2^{k_0}\r\}\r|.
\end{align}
Similarly, for any $(k_0,i)\in \mathrm{I}_1$, using $W_p\in A_1(\rn)$ (see \eqref{f-A1}) and
$|B_{i,\mathrm{E}^\complement}^{k_0}|\ge \frac12|B_i^{k_0}|$, we obtain
\begin{align*}
\frac{W_{p}\lf(B_i^{k_0}\r)}{W_{p}\lf(B_{i,\mathrm{E}^\complement}^{k_0}\r)}
\ls \frac{\lf|B_i^{k_0}\r|}{\lf|B_{i,\mathrm{E}^\complement}^{k_0}\r|}\ls 1.
\end{align*}
This, combined with the same argument as in \eqref{f-thm2-2}, implies that
\begin{align}\label{f-thm2-3}
\dsum_{(k_0,i)\in \mathrm{I}_1} W_{p}\lf(B_i^{k_0}\r)\ls
\dsum_{(k_0,i)\in \mathrm{I}_1} W_{p}\lf(B_{i,\mathrm{E}^\complement}^{k_0}\r)
\ls  W_{p}\lf(\lf\{x\in \mathrm{E}^\complement:\ f^*(x)>2^{k_0}\r\}\r).
\end{align}

We split the decomposition in
\eqref{f-thm2-1} into
\begin{align*}%\label{f-thm1-x}
f&=\dsum_{(k,i)\in \mathrm{I}_0}h^k_i+\dsum_{(k,i)\in \mathrm{I}_1}h^k_i
=:\dsum_{(k,i)\in \mathrm{I}_0}\lz_{k,i}^{(0)}a_{k,i}^{(0)}+\dsum_{(k,i)\in \mathrm{I}_1}
\lz_{k,i}^{(1)}a_{k,i}^{(1)}=:f_0+f_1,
\end{align*}
where
$$
\begin{cases}
\lz_{k,i}^{(0)}:=c2^k|B_i^k|;\\
\lz_{k,i}^{(1)}:=c2^k[W_{p}(B_i^k)]^{1/p};\\
a_{k,i}^{(0)}:=h_i^k/\lz_{k,i}^{(0)};\\
a_{k,i}^{(1)}:=h_i^k/\lz_{k,i}^{(1)}
\end{cases}
$$
and
$c$ is the same as in (b) of Lemma \ref{lem-CZd}(iv) and it is independent of $k,\,i$ and $f$.
By Remark \ref{rem-at} and Lemma \ref{lem-CZd},
it is easy to see that
$a_{k,i}^{(0)}$ is a $(1,\,\fz,\,s)$-atom associated with the ball $B_i^k$ and
$a_{k,i}^{(1)}$ is a $(p,\,\fz,\,s)_{W_{p}}$-atom associated with $B_i^k$.
From \eqref{f-E} and \eqref{f-thm2-2}, it follows that
\begin{align*}
\dsum_{(k,i)\in \mathrm{I}_0}\lf|\lz_{k,i}^{(0)}\r|&\ls\sum_{k\in\zz} 2^k\lf|\lf\{x\in \mathrm{E}:\
f^*(x)>2^k\r\}\r|\\
&\ls \dint_{\mathrm{E}}f^*(x)\,dx\notag\\
&
\sim \dint_{\mathrm{E}}\frac{f^*(x)}{1+[f^*(x)(1+|x|)^n]^{1-p}
[\log(e+|x|)]^p}\,dx\notag\\
&\ls \dint_{\rn}\Phi_{p}\lf(x,\,f^*(x)\r)\,dx\ls 1,\notag
\end{align*}
where the last inequality follows from the assumption
$\|f\|_{H^{\Phi_p}(\rn)}=1$.
Further, using the atomic characterization of $H^1(\rn)$,
we know that $f_0\in H^1(\rn)$ and
$\|f_0\|_{H^1(\rn)}\ls 1 \sim\|f\|_{H^{\Phi_p}(\rn)}$.

For $f_1$, using \eqref{f-thm2-3}, we find that
\begin{align*}
\dsum_{(k,i)\in \mathrm{I}_1}\lf|\lz_{k,i}^{(1)}\r|^p
&\ls\sum_{k\in\zz} 2^{kp}W_{p}\lf(\lf\{x\in \mathrm{E}^\complement:\
f^*(x)>2^k\r\}\r)\\ \notag
&\ls \dint_{\mathrm{E}^\complement}\lf[f^*(x)\r]^p W_{p}(x)\,dx\\ \notag
&
\sim \dint_{\mathrm{E}^\complement}\frac{f^*(x)}{1+[f^*(x)(1+|x|)^n]^{1-p}
[\log(e+|x|)]^p}\,dx\notag\\
&\ls \dint_{\rn}\Phi_{p}\lf(x,\,f^*(x)\r)\,dx\ls 1,\notag
\end{align*}
Then, using the atomic characterization of $H_{W_p}^{{p}}(\rn)$ in Remark \ref{rem-at}, we know that $f_1\in H_{W_{p}}^p(\rn)$ and
$\|f_1\|_{H_{W_{p}}^p(\rn)}\ls 1 \sim\|f\|_{H^{\Phi_p}(\rn)}$.

Summarizing the above estimates gives us that  $L^q_{\Phi_{p}(\cdot,1)}(\rn)
\cap H^{\Phi_{p}}(\rn)\subset H^1(\rn)+H_{W_p}^p(\rn)$ and, for any $f\in L^q_{\Phi_{p}(\cdot,1)}(\rn)
\cap H^{\Phi_{p}}(\rn)$, there exist $f_0\in H^1(\rn)$ and $f_1\in H_{W_p}^p(\rn)$
such that $f=f_0+f_1$ in $\mathcal{S}'(\rn)$ and
\begin{align}\label{f-thm1-13}
\lf\|f_0\r\|_{H^1(\rn)}+\lf\|f_1\r\|_{H_{W_p}^p(\rn)}\ls \lf\|f\r\|_{H^{\Phi_p}(\rn)}.
\end{align}

For a general $f\in H^{\Phi_p}(\rn)$,
by Lemma \ref{lem-dp},
there exist $\{f_l\}_{l\in\nn}\subset L^q_{\Phi_1(\cdot,\,1)}(\rn)\cap H^{\Phi_p}(\rn)$ such that
$f=\sum_{l\in\nn} f_l$ in $H^{\Phi_p}(\rn)$ and
\begin{align*}%\label{f-thm1-11}
\|f_l\|_{H^{\Phi_p}(\rn)}\le 2^{2-l}\|f\|_{H^{\Phi_p}(\rn)}
\end{align*}
(see also \cite[p.\,138]{Ky14} for this fact).
Applying the previous argument to each $f_l$ with $l\in\nn$, we find
$f_{l,0}\in H^{1}(\rn)$ and $f_{l,1}\in H_{W_p}^p(\rn)$ such that
$f_l=f_{l,0}+f_{l,1}$ in $\mathcal{S}'(\rn)$, and
$$
\lf\|f_{l,0}\r\|_{H^1(\rn)}+\lf\|f_{l,1}\r\|_{H_{W_p}^p(\rn)}\ls \lf\|f_l\r\|_{H^{\Phi_p}(\rn)}\ls 2^{-l}\|f\|_{H^{\Phi_p}(\rn)}.
$$
Define $f_{0}:=\sum_{l\in\nn}f_{l,0}$
and $f_{1}:=\sum_{l\in\nn}f_{l,1}$.
It follows that $f_0\in H^1(\rn)$ and $f_{1}\in H_{W_p}^p(\rn)$, with
$$
\lf\|f_0\r\|_{H^1(\rn)}\le \sum_{l\in\nn} \lf\|f_{l,0}\r\|_{H^1(\rn)}
\ls \lf\|f\r\|_{H^{\Phi_p}(\rn)}
$$
and
$$
\lf\|f_1\r\|_{H_{W_p}^p(\rn)}^p\le\sum_{l\in\nn}\lf\|f_{l,1}\r\|_{H_{W_p}^p(\rn)}^p
\ls \sum_{l\in\nn} 2^{-lp}\|f\|_{H^{\Phi_p}(\rn)}^p\ls\|f\|_{H^{\Phi_p}(\rn)}^p.
$$
Altogether, we obtain $f=f_0+f_1$ in $\mathcal{S}'(\rn)$ and \eqref{f-thm1-13}.
This proves the inclusion $H^{\Phi_p}(\rn)\subset H^{1}(\rn)+H_{W_p}^p(\rn)$
in the case  $n(1/p-1)\in\nn$.

The proof of  $H^{\Phi_p}(\rn)\subset H^{1}(\rn)+H_{W_p}^p(\rn)$ in the case $n(1/p-1)\notin \nn$ is similar to the previous proof for the case $n(1/p-1)\in\nn$,
but now instead of \eqref{f-E} we define the set $\rm E$ as follows:
$$
\mathrm{E}:=\lf\{x\in\rn:\ f^*(x)<(1+|x|)^{-n}\r\}.
$$
The details are omitted.
This finishes the proof of Theorem \ref{thm1} when
$p\in(0,\,1)$.
\end{proof}

\begin{remark}\label{rem-ea}
Let $p\in(0,\,1)$ and $B$ be a ball in $\rn$.
For any $s\in \zz_+$ and $s\ge \lfloor n(1/p-1)\rfloor$, we know from Proposition \ref{prop-new}(iv) that a function $a$ on
$\rn$ is a $(\Phi_p , \fz, s)$-atom associated with $B$ if and only
if a is a $(p, \fz, s)_{W_p}$-atom associated with the same ball $B$.
On the other hand,  Theorem \ref{thm1} implies that $H_{W_p}^p\subsetneqq
H^{\Phi_p}(\rn)$. However, for any $p\in(0,\,1)$, from Lemma \ref{lem-whn} and the definitions of the dual spaces
of $H^{\Phi_p}(\rn)$ and $H_{W_p}^p(\rn)$ (see \cite[Theorem~3.2]{Ky14} for $p\in(n/(n+1), 1)$ and \cite[Theorem~3.5]{LY13} for general $p\in(0,1)$),
we deduce that
\begin{align*}
\lf[H^{\Phi_p}(\rn)\r]^*=\lf[H_{W_p}^p(\rn)\r]^*.
\end{align*}
This shows that the difference between $H^{\Phi_p}(\rn)$
and $H_{W_p}^p(\rn)$ is very small.
\end{remark}

\section{Proof of Theorem \ref{thm2}}\label{s4}

\hskip\parindent  Applying Theorem \ref{thm1}, we  prove Theorem \ref{thm2} in this section.

\begin{proof}[Proof of Theorem \ref{thm2}]
We first prove (i). Let $p\in(0,\,1]$ and $f\in H^{\Phi_p}(\rn)$. By Theorem \ref{thm1},
we know that there exist $f_0\in H^{\phi_0}(\rn)$ and $f_1\in H_{W_p}^p(\rn)$
such that $f=f_0+f_1$ in $\mathcal{S}'(\rn)$ and
$$
\lf\|f\r\|_{H^{\Phi_p}(\rn)}\sim \|f_0\|_{H^{\phi_0}(\rn)}+\|f_1\|_{H_{W_p}^p(\rn)}.
$$
Since $T$ is quasilinear and bounded on $H^{\phi_0}(\rn)$ and $H_{W_p}^p(\rn)$,
it follows, from \eqref{Phip}, that
\begin{align*}
\|T(f)\|_{H^{\Phi_p}(\rn)}&\ls \|T(f_0)\|_{H^{\Phi_p}(\rn)}+\|T(f_1)\|_{H^{\Phi_p}(\rn)}
\ls \|T(f_0)\|_{H^{{\phi_0}}(\rn)}+\|T(f_1)\|_{H_{W_p}^{p}(\rn)}\\
&\ls \|f_0\|_{H^{{\phi_0}}(\rn)}+\|f_1\|_{H_{W_p}^{p}(\rn)}\sim \|f\|_{H^{\Phi_p}(\rn)},
\end{align*}
which implies that $T$ is bounded on $H^{\Phi_p}(\rn)$. This shows (i).

The proof of (ii) is similar to that of (i) by applying the equivalence
established in Theorem \ref{thm1}(ii)
\begin{align*}
H^{\Phi_p}(\rn)=H^1(\rn)+H_{W_p}^p(\rn),
\end{align*}
the details being omitted. This finishes the proof of
Theorem \ref{thm2}.
\end{proof}

\bigskip

{\small\noindent Jun Cao

\noindent
Department of Applied Mathematics, Zhejiang University of Technology,
Hangzhou 310023, People's Republic of China

\noindent{\it E-mail:} \texttt{caojun1860@zjut.edu.cn}

\bigskip

\noindent {Liguang Liu (Corresponding author)}

\noindent  Department of Mathematics, School of Information, Renmin University
of China, Beijing 100872, China

\noindent {\it E-mail}: \texttt{liuliguang@ruc.edu.cn}

\bigskip

\noindent Dachun Yang and Wen Yuan

\noindent School of Mathematical Sciences, Beijing Normal
University, Laboratory of Mathematics and Complex Systems, Ministry
of Education, Beijing 100875, People's Republic of China

\noindent{\it E-mails:}
\texttt{dcyang@bnu.edu.cn} (D. Yang)

\hspace{0.85cm}\texttt{wenyuan@bnu.edu.cn} (W. Yuan)
}


\begin{thebibliography}{99}

\bibitem{BL76}
J. Bergh and J. L\"ofstr\"om,
Interpolation Spaces. An Introduction,
Grundlehren der Mathematischen Wissenschaften, No. 223, Springer-Verlag, Berlin-New York, 1976.

\vspace{-0.3cm}

\bibitem{BCKLYY}
A. Bonami, J. Cao, L. D. Ky, L. Liu, D. Yang and W. Yuan,
A complete solution to bilinear decompositions of products of
Hardy and Campanato spaces,
Preprint.

\vspace{-0.3cm}

\bibitem{BFG}
A. Bonami, J. Feuto and S. Grellier,
Endpoint for the DIV-CURL lemma in Hardy spaces,
Publ. Mat. 54 (2010), 341-358.

\vspace{-0.3cm}

\bibitem{BGK12}
A. Bonami, S. Grellier and L. D. Ky,
Paraproducts and products of functions in ${\rm BMO}({\mathbb R}^n)$ and $H^1({\mathbb R}^n)$ through wavelets,
J. Math. Pures Appl. (9) 97 (2012), 230-241.

\vspace{-0.3cm}

\bibitem{BIJZ07}
A. Bonami, T. Iwaniec, P. Jones and M. Zinsmeister,
On the product of functions in BMO and $H^1$,
Ann. Inst. Fourier (Grenoble) 57 (2007), 1405-1439.

\vspace{-0.3cm}

\bibitem{CLMS93}
R. R. Coifman, P.-L. Lions, Y. Meyer and S. Semmes, Compensated compactness and Hardy spaces,
J. Math. Pures Appl. (9) 72 (1993), 247-286.

\vspace{-0.3cm}

\bibitem{FS72}
C. Fefferman and E. M. Stein, $H^p$ spaces of several variables,
Acta Math. 129 (1972), 137-195.

\vspace{-0.3cm}

\bibitem{Ga79}
J. Garc\'ia-Cuerva,
Weighted $H^p$ spaces, Dissertationes Math. (Rozprawy Mat.) 162 (1979),
1-63.

\vspace{-0.3cm}

\bibitem{Gr08}
L. Grafakos,
Classical Fourier Analysis,
Second edition, Graduate Texts in Mathematics, 249, Springer, New York, 2008.

%\vspace{-0.3cm}
%
%\bibitem{Gu82}
%L. Gustavsson,
%Weighted $H^p$ spaces, Dissertationes Math. (Rozprawy Mat.) 162 (1979),
%1-63.

\vspace{-0.3cm}

\bibitem{Ja80}
S. Janson, Generalizations of Lipschitz spaces and an application to Hardy spaces and bounded mean oscillation,
Duke Math. J. 47 (1980), 959-982.

\vspace{-0.3cm}

\bibitem{JY10}
R. Jiang and D. Yang, New Orlicz-Hardy spaces associated with divergence form elliptic
operators, J. Funct. Anal. 258 (2010), 1167-1224

\vspace{-0.3cm}

\bibitem{Ky13} L. D. Ky, Bilinear decompositions and commutators of singular integral operators,
Trans. Amer. Math. Soc. 365 (2013), 2931-2958.

\vspace{-0.3cm}

\bibitem{Ky14}
L. D. Ky,
New Hardy spaces of Musielak-Orlicz type and boundedness of subilinear operators,
Integral Equations Operator Theory 78 (2014), 115-150.

\vspace{-0.3cm}

\bibitem{LY13}
Y. Liang and D. Yang,
Musielak-Orlicz Campanato spaces and applications,
J. Math. Anal. Appl. 406 (2013), 307-322.

\vspace{-0.3cm}

\bibitem{LCFY} L. Liu, D.-C. Chang, X. Fu and D. Yang,
Endpoint boundedness of commutators on spaces of homogeneous type,
{Applicable Analysis} (2017), https://doi.org/10.1080/00036811. 2017.1341628.

\vspace{-0.3cm}

\bibitem{Lu95}
S. Lu,
Four Lectures on Real $H^p$ Spaces,
World Scientific Publishing Co., Inc., River Edge, NJ, 1995.

%\vspace{-0.3cm}
%
%\bibitem{SSV14}
%W. Sickel, L. Skrzypczak and J. Vyb\'iral,
%Complex interpolation of weighted Besov and Lizorkin-Triebel spaces,
%Acta Math. Sin. (Engl. Ser.) 30 (2014), 1297-1323.

\vspace{-0.3cm}

\bibitem{St93}
E. M. Stein,
Harmonic Analysis: Real-Variable Methods, Orthogonality, and Oscillatory Integrals,
Princeton Mathematical Series, 43. Monographs in Harmonic Analysis, III,
Princeton University Press, Princeton, NJ, 1993.

\vspace{-0.3cm}

\bibitem{SW60}
E. M. Stein and G. Weiss,
On the theory of harmonic functions of several variables. I. The theory of $H^p$-spaces, Acta Math. 103 (1960), 25-62.

\vspace{-0.3cm}

\bibitem{Str79}
J.-O. Stromberg, Bounded mean oscillation with Orlicz norms and duality of Hardy spaces, ¡§
Indiana Univ. Math. J. 28 (1979), 511-544.

\vspace{-0.3cm}

\bibitem{ST89}
J.-O. Stromberg and A. Torchinsky, Weighted Hardy Spaces, Lecture Notes in Mathematics, ¡§
1381. Springer-Verlag, Berlin, 1989.

\vspace{-0.3cm}

\bibitem{Vi87}
B. Viviani, An atomic decomposition of the predual of $BMO(\rho)$, Rev. Mat. Iberoamericana
3 (1987), 401-425.

\vspace{-0.3cm}

\bibitem{YY12}
D. Yang and S. Yang, Local Hardy spaces of Musielak-Orlicz type and their applications,
Sci. China Math. 55 (2012), 1677-1720.


\vspace{-0.3cm}

\bibitem{ylk17} D. Yang, Y. Liang and L. D. Ky,
Real-Variable Theory of Musielak-Orlicz Hardy Spaces,
Lecture Notes in Mathematics 2182, Springer-Verlag, Cham, 2017.

\end{thebibliography}
\end{document}